 \font\smallit=cmti10
\renewcommand{\@seccntformat}[1]{\csname the#1\endcsname. }
\newtheorem{theorem}{Theorem}[section]
 \newtheorem{lemma}[theorem]{Lemma}
 \newtheorem{corollary}[theorem]{Corollary}
 \newtheorem{definition}[theorem]{Definition}
 \newtheorem{remark}[theorem]{Remark}
\begin{document}

\begin{center}
 {\bf Combinatorial Sums $\sum_{k\equiv r(\mbox{mod }
m)}{n\choose k}a^k$ and Lucas Quotients (II)}
 \vskip 20pt
 {\bf Jiangshuai Yang}\\
 {\smallit Key Laboratory of Mathematics Mechanization, NCMIS, Academy of Mathematics and Systems Science, Chinese Academy of Sciences, Beijing 100190, People's Republic of China}\\
 {\tt yangjiangshuai@amss.ac.cn}\\
 \vskip 10pt
 {\bf Yingpu Deng}\\
 {\smallit Key Laboratory of Mathematics Mechanization, NCMIS, Academy of Mathematics and Systems Science, Chinese Academy of Sciences, Beijing 100190, People's Republic of China}\\
 {\tt dengyp@amss.ac.cn}\\
 \end{center}
\vskip 30pt
\centerline{\bf Abstract}

\noindent In \cite{dy}, we obtained some congruences for Lucas quotients of two infinite families of Lucas sequences by studying the combinatorial sum
$$\sum_{k\equiv r(\mbox{mod }m)}{n\choose k}a^k.$$
In this paper, we  show that the sum can be expressed in terms of some recurrent sequences with orders not exceeding $\varphi{(m)}$ and
  give some new congruences.

\pagestyle{myheadings}
 \thispagestyle{empty}
 \baselineskip=12.875pt
 \vskip 30pt

\section{Introduction}

\noindent Let $p$ be an odd prime, using the formula for the  sum
$$\sum_{k\equiv r(\mbox{mod }8)}{n\choose k},$$
   Sun \cite{s1995} proved that
\[\sum\limits_{k=1}^{\frac{p-1}{2}}\frac{1}{k\cdot2^k}\equiv\sum\limits_{k=1} ^{[\frac{3p}{4}]}\frac{(-1)^{k-1}}{k}\pmod p.\]
Later, Shan and E.T.H.Wang \cite{sw} gave a simple proof of the above congruence. In \cite{sun5}, Sun proved five similar congruences  by using  the formulas for Fibonacci quotient and Pell quotient.

In \cite{s2002}, Sun  showed that the sum
 $$\sum_{k\equiv r(\mbox{mod }m)}{n\choose k},$$
 where $n,m$ and $r$ are integers with $m,n>0$, can be expressed in terms of some recurrent sequences with orders not exceeding $\varphi{(m)}/2$, and obtained the following congruence
 \[\sum_{k=1}^{\frac{p-1}{2}}\frac{3^k}{k}\equiv\sum_{k=1}^{\left[\frac{p}{6}\right]}\frac{(-1)^k}{k} \pmod p.
\]

In \cite{dy},  we studied more general sum
\begin{equation}\label{generalsum}
\sum_{k\equiv r(\mbox{mod }m)}{n\choose k}a^k,
\end{equation}
and obtained congruences for Lucas quotients of two infinite families of Lucas sequences. See (\cite{dy} Theorems  4.10 and 5.4). In this paper, we continue studying the sum. We show that it can be expressed in terms of some recurrent sequences with orders not exceeding $\varphi{(m)}$, and
 obtain some new congruences.

  For $x\in\mathbb{R}$, we use $[x]$ to denote the integral part of $x$ i.e., the largest integer $\leq x$. For odd prime $p$ and integer $b$, let $\left(\frac bp\right)$ denote the Legendre symbol   and $q_p(b)$ denote the Fermat quotient $(b^{p-1}-1)/p$ if $p\nmid b$.   When $c,d\in\mathbb{Z}$, as usual $(c,d)$ stands for the greatest common divisor of $c$ and $d$.  For any positive integer $m$, let $\zeta_m=e^{\frac{2\pi i}{m}}$ be the primitive $m$-th root of unity and let $\varphi({m})$,  $\mu{(m)}$ denote the Euler totient function and   M$\ddot{\textup{o}}$bius  function respectively.   Throughout this paper, we fix $a\neq 0,\pm1$.

  \section{Main Results}

\begin{definition}\label{defsum}
{\rm Let $n,m,r$  be integers with $n>0$ and $m>0$. We define
  $$\left[\begin{array}{c}n \\ r\\\end{array}\right] _{m}(a):=\sum_{\substack{k=0\\k\equiv r({\mbox{mod }}m)}}^n\binom nk a^k,$$
  where  ${n\choose k}$ is the binomial coefficient with the convention ${n\choose k}=0$ for $k<0$ or $k>n$.}
\end{definition}
 \noindent Then we have the following theorem.
  \begin{theorem}\label{Maintheorem}
  Let $m,n\in\mathbb{Z}^+$, and $k\in\mathbb{Z}$. Write
  $$W_{n}(k,m)=\sum_{\substack{l=1\\(l,m)=1}}^m\zeta_m^{-kl}(1+a\zeta_m^l)^n,$$
  and
   $$A_{m}(x)=\prod_{\substack{l=1\\(l,m)=1}}^m(x-1-a\zeta_m^l)=\sum\limits_{s=0}^{\varphi(m)}b_sx^s.$$
   Then
  $$A_{m}(x)\in \mathbb{Z}[x] \quad and \quad\sum\limits_{s=0}^{\varphi(m)}b_sW_{n+s}(k,m)=0.$$
  Moreover, for any   $r\in\mathbb{Z}$ we have
  \begin{equation*}
  \left[\begin{array}{c}n \\r\\\end{array}\right] _{m}(a)=\frac1m\sum\limits_{d\mid m }W_{n}(r,d).
  \end{equation*}
 \end{theorem}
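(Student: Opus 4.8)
The plan is to establish the three assertions separately, since each rests on a different mechanism: a cyclotomic identity gives the integrality of $A_m$, a term-by-term substitution gives the recurrence, and a roots-of-unity filter followed by a regrouping by order gives the closed form.

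For $A_m(x)\in\mathbb{Z}[x]$, I would note that the roots of $A_m$ are exactly the numbers $1+a\zeta_m^l$ with $(l,m)=1$, so the substitution $y=(x-1)/a$ collapses the product into the $m$-th cyclotomic polynomial:
\[
A_m(x)=a^{\varphi(m)}\prod_{\substack{l=1\\(l,m)=1}}^m\Big(\tfrac{x-1}{a}-\zeta_m^l\Big)=a^{\varphi(m)}\,\Phi_m\!\Big(\tfrac{x-1}{a}\Big).
\]
Writing $\Phi_m(y)=\sum_{j=0}^{\varphi(m)}c_jy^j$ with $c_j\in\mathbb{Z}$, each term becomes $c_j\,a^{\varphi(m)-j}(x-1)^j$, and since $j\le\varphi(m)$ the exponent $\varphi(m)-j$ is a nonnegative integer, so $A_m(x)\in\mathbb{Z}[x]$. (Equivalently, the $b_s$ are elementary symmetric functions of the algebraic integers $1+a\zeta_m^l$ that are fixed by $\mathrm{Gal}(\mathbb{Q}(\zeta_m)/\mathbb{Q})$, hence rational integers.)

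For the recurrence, the key point is that every $1+a\zeta_m^l$ with $(l,m)=1$ is a root of $A_m$, so $\sum_{s=0}^{\varphi(m)}b_s(1+a\zeta_m^l)^s=0$. I would then interchange the two summations:
\[
\sum_{s=0}^{\varphi(m)}b_sW_{n+s}(k,m)=\sum_{\substack{l=1\\(l,m)=1}}^m\zeta_m^{-kl}(1+a\zeta_m^l)^n\sum_{s=0}^{\varphi(m)}b_s(1+a\zeta_m^l)^s=0,
\]
the inner sum vanishing for each $l$.

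For the closed form I would begin with the standard roots-of-unity filter: because $\frac1m\sum_{j=0}^{m-1}\zeta_m^{j(k-r)}$ equals $1$ when $k\equiv r\pmod m$ and $0$ otherwise, the binomial theorem yields $\left[\begin{smallmatrix}n\\r\end{smallmatrix}\right]_m(a)=\frac1m\sum_{j=0}^{m-1}\zeta_m^{-jr}(1+a\zeta_m^j)^n$. The remaining step is to regroup this full sum over $j\in\{0,\dots,m-1\}$ according to the multiplicative order $d$ of $\zeta_m^j$, which ranges over the divisors of $m$: for each $d\mid m$ the indices with $m/(j,m)=d$ are $j=(m/d)l$ with $(l,d)=1$, and there $\zeta_m^j=\zeta_d^l$, $\zeta_m^{-jr}=\zeta_d^{-lr}$, and $(1+a\zeta_m^j)^n=(1+a\zeta_d^l)^n$. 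Thus the block for a fixed $d$ sums to $W_n(r,d)$, and collecting over all $d\mid m$ gives $\frac1m\sum_{d\mid m}W_n(r,d)$. The main obstacle I anticipate is exactly this last regrouping: I must verify that $j\mapsto\zeta_m^j$ restricts to a bijection between the residues $j$ with $m/(j,m)=d$ and the primitive $d$-th roots $\zeta_d^l$, and that the character value reduces correctly, $\zeta_m^{-jr}=\zeta_d^{-lr}$, via $j=(m/d)l$. I would also check the boundary index $j=0$, which corresponds to $d=1$ and must reproduce the single summand $(1+a)^n=W_n(r,1)$.
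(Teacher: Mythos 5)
Your proof is correct and follows the same three-step structure as the paper's: the recurrence comes from interchanging sums and using $A_m(1+a\zeta_m^l)=0$, and the closed form comes from the roots-of-unity filter followed by regrouping the indices $j$ according to $d=m/(j,m)$, exactly as in the paper. The only divergence is the integrality of $A_m$, where you use the explicit identity $A_m(x)=a^{\varphi(m)}\Phi_m\left(\frac{x-1}{a}\right)$ (valid since $a$ is a fixed nonzero integer) rather than the paper's appeal to the Fundamental Theorem on Symmetric Polynomials; this is a slightly more direct route to the same conclusion.
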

  \begin{proof}
    It is easy to see that the coefficients of $A_{m}(x+1)$ are symmetric polynomials in those primitive $m$-th roots of unity with integer coefficients. Sicne
    \[\Phi_m(x)=\prod_{\substack{l=1\\(l,m)=1}}^m(x-\zeta_m^l)\in\mathbb{Z}[x],\]
     $A_{m}(x+1)\in \mathbb{Z}[x]$ by Fundamental Theorem on Symmetric Polynomials. Therefore $A_{m}(x)\in \mathbb{Z}[x].$

    For any positive integer $n$, we clearly have
    \begin{align*}
    \sum\limits_{s=0}^{\varphi(m)}b_sW_{n+s}(k,m)
    &=\sum\limits_{s=0}^{\varphi(m)}b_s\sum_{\substack{l=1\\(l,m)=1}}^m\zeta_m^{-kl}(1+a\zeta_m^l)^{n+s}\\
    &=\sum_{\substack{l=1\\(l,m)=1}}^m\zeta_m^{-kl}(1+a\zeta_m^l)^{n}\sum\limits_{s=0}^{\varphi(m)}b_s(1+a\zeta_m^l)^{ s}\\
    &=\sum_{\substack{l=1\\(l,m)=1}}^m\zeta_m^{-kl}(1+a\zeta_m^l)^{n}A_{m}(1+a\zeta_m^l)\\
    &=0.
    \end{align*}
    Let $r\in\mathbb{Z}$, then we have
    \begin{align*}
      \left[\begin{array}{c}n \\ r\\\end{array}\right] _{m}(a)
      &=\sum\limits_{k=0}^n\binom nka^k\cdot\frac1m\sum\limits_{l=1}^m\zeta_m^{(k-r)l}\\
      &=\frac1m\sum\limits_{l=1}^m\zeta_m^{-rl}(1+a\zeta_m^l)^n\\
      &=\frac1m\sum\limits_{d\mid m}\sum_{\substack{b=1\\(b,d)=1}}^d\zeta_d^{-rb}(1+a\zeta_d^b)^n\\
      &=\frac1m\sum\limits_{d\mid m}W_{n}(r,d).
    \end{align*}
    This ends the proof.
  \end{proof}
  Note that the theorem   is a generalization of Theorem 1 of \cite{s2002}.
 \begin{remark}\label{Wremark}
  The last result shows that $\left[\begin{array}{c}n \\ r\\\end{array}\right] _{m}(a)$ can be expressed in terms of some linearly recurrent sequences with orders not exceeding $\varphi{(m)}.$
 \end{remark}
  Now we list $A_{m}(x)$ for $1\leq m\leq6$:
 \begin{align*}
   &A_{1}(x)=x-1-a,  A_{2}(x)=x-1+a,\\
   &A_{3}(x)=x^2-(2-a)x+a^2-a+1, A_{4}(x)=x^2-2x+a^2+1,\\
   &A_{5}(x)=x^4-(4-a)x^3+(a^2-3a+6)x^2+(a^3-2a^2+3a+4)+a^4-a^3+a^2-a+1,\\
   &A_{6}(x)=x^2-(a+2)x+a^2+a+1.
 \end{align*}
\begin{lemma}\textup{(\cite{s2002})}\label{Molemma}
  Let $m,c$ be integers with $m>0$. Then we have
 \begin{equation*}
 \sum\limits_{d\mid m}\mu(\frac md)d\delta_{d\mid c}=\varphi(m)\frac{\mu(m/(c,m))}{\varphi(m/(c,m))},
 \end{equation*}
 where
 \[\delta_{d\mid c}=\begin{cases}1,&\mbox{ if  }d\mid c  \mbox{ holds};\\0,&\mbox{otherwise}.\end{cases} \]
 \end{lemma}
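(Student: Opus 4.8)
The plan is to recognize the left-hand side as a Dirichlet convolution and exploit multiplicativity. Since $d\mid m$ together with $d\mid c$ is equivalent to $d\mid (c,m)$, writing $g=(c,m)$ the sum becomes $\sum_{d\mid g}\mu(m/d)\,d$. More structurally, I would set $h(d)=d\,\delta_{d\mid c}$, so that the left-hand side is precisely $(\mu * h)(m)$, the Dirichlet convolution evaluated at $m$. For fixed $c$ the function $h$ is multiplicative, since both $d\mapsto d$ and the indicator $d\mapsto\delta_{d\mid c}=\prod_p\delta_{p^{v_p(d)}\mid c}$ are multiplicative. As $\mu$ is multiplicative, so is $f:=\mu * h$.

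Next I would check that the right-hand side is also multiplicative in $m$, with $c$ held fixed. If $m=m_1m_2$ with $(m_1,m_2)=1$, then $(c,m)=(c,m_1)(c,m_2)$ and $m/(c,m)=\bigl(m_1/(c,m_1)\bigr)\bigl(m_2/(c,m_2)\bigr)$ is a product of coprime integers, so the multiplicativity of $\varphi$ and $\mu$ yields the factorization of the whole expression. With both sides multiplicative, it suffices to verify the identity at prime powers $m=p^a$, the case $m=1$ being trivial since both sides equal $1$.

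For the prime-power check, write $v=v_p(c)$. Because $\mu(p^{a-j})\neq 0$ forces $j\in\{a-1,a\}$, I obtain $f(p^a)=p^a\delta_{p^a\mid c}-p^{a-1}\delta_{p^{a-1}\mid c}$. On the right-hand side, with exponent $a-g_p$ of $p$ in $m/(c,m)$, where $g_p=\min(a,v)$, the $p$-factor equals $\varphi(p^a)\,\mu(p^{a-g_p})/\varphi(p^{a-g_p})$. I would then compare the two across the three regimes: when $v\ge a$ both sides equal $\varphi(p^a)$; when $v=a-1$ both equal $-p^{a-1}$, the denominator $\varphi(p)=p-1$ cancelling the factor $p-1$ inside $\varphi(p^a)$; and when $v\le a-2$ both vanish, on the left because neither indicator fires and on the right because $\mu(p^{a-g_p})=0$ as $a-g_p\ge 2$.

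The prime-power arithmetic is routine; the step demanding the most care is confirming multiplicativity of the right-hand side, that is, that $(c,m)$ and the quotient $m/(c,m)$ genuinely factor over coprime parts of $m$, together with the boundary case $v=a-1$, where the exact cancellation between $\varphi(p^a)$ and the $\varphi(p)$ in the denominator is what produces the matching value $-p^{a-1}$. Multiplying the verified prime-power identities over all $p\mid m$ then gives the lemma.
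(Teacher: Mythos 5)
Your proposal is correct and follows essentially the same route as the paper: both sides are multiplicative in $m$, so the identity reduces to the prime-power case $m=p^a$, where only $d=p^{a-1},p^a$ contribute to the M\"obius sum and the three regimes $v_p(c)\ge a$, $v_p(c)=a-1$, $v_p(c)\le a-2$ match the three cases in the paper's computation. Your added justification of why each side is multiplicative (the Dirichlet convolution viewpoint and the factorization of $(c,m)$ over coprime parts) fills in a step the paper merely asserts.
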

 \begin{proof}
  We can find that both sides are multiplicative with respect to $m$, thus we only need to prove it when $m$ is a prime power. For any prime $p$ and positive integer $k$, we have
 \begin{align*}
 \sum\limits_{d\mid p^k}\mu(\frac {p^k}d)d\delta_{d\mid c}
 &=\sum\limits_{s=0}^k\mu(p^{k-s})p^s\delta_{p^s\mid c}\\
 &=p^k\delta_{p^k\mid c}-p^{k-1}\delta_{p^{k-1}\mid c}\\
 &=\begin{cases}p^k-p^{k-1}&\textup{if}\; p^k\mid c,\\
 -p^{k-1}&\textup{if}\;p^{k-1}\parallel c,\\
 0&\textup{if} \;p^{k-1}\nmid c.\end{cases}\\
 &=\varphi(p^k)\frac{\mu(p^k/(c,p^k))}{\varphi(p^k/(c,p^k))}.
 \end{align*}
 This concludes the proof.
 \end{proof}

 \begin{theorem}\label{Wtheorem}
   Let $m,n\in\mathbb{Z}^+ ,r\in\mathbb{Z}$. Then
  \begin{equation*}
  W_{n}(r,m)=\varphi(m)\sum\limits_{k=0}^n\frac{\mu(m/(k-r,m))}{\varphi(m/(k-r,m))}\binom nka^k.
  \end{equation*}
 \end{theorem}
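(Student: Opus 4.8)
The plan is to start directly from the definition of $W_n(r,m)$ and reduce everything to a Ramanujan-type sum that matches the left-hand side of Lemma \ref{Molemma}. First I would expand the $n$-th power by the binomial theorem, writing $(1+a\zeta_m^l)^n=\sum_{k=0}^n\binom nk a^k\zeta_m^{kl}$, and then interchange the two finite sums. This isolates the factor $\binom nk a^k$ and leaves the inner sum $\sum_{(l,m)=1}\zeta_m^{(k-r)l}$, a Ramanujan sum in the variable $c:=k-r$. So the whole statement collapses to evaluating $c_m(c)=\sum_{1\le l\le m,\,(l,m)=1}\zeta_m^{cl}$ in closed form.

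The key step — and the one I expect to carry the real content — is the evaluation of this Ramanujan sum. I would remove the coprimality restriction by using the identity that the indicator of $(l,m)=1$ equals $\sum_{d\mid(l,m)}\mu(d)$, which gives $c_m(c)=\sum_{d\mid m}\mu(d)\sum_{1\le l\le m,\,d\mid l}\zeta_m^{cl}$. Substituting $l=dl'$ with $l'=1,\dots,m/d$ and observing $\zeta_m^{cd}=\zeta_{m/d}^{c}$ turns the inner sum into a full geometric sum $\sum_{l'=1}^{m/d}\zeta_{m/d}^{cl'}$, which equals $(m/d)\,\delta_{(m/d)\mid c}$. Reindexing by $e=m/d$ then yields $c_m(c)=\sum_{e\mid m}\mu(m/e)\,e\,\delta_{e\mid c}$, which is exactly the left-hand side of Lemma \ref{Molemma} evaluated at this $c$.

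Finally I would apply Lemma \ref{Molemma} to conclude that $c_m(k-r)=\varphi(m)\,\mu(m/(k-r,m))/\varphi(m/(k-r,m))$, substitute this back into the interchanged double sum, and pull the common factor $\varphi(m)$ out front; this produces the claimed formula verbatim. The only point demanding care is the bookkeeping in the reindexing $d\leftrightarrow m/d$, so that the expression for $c_m(c)$ lines up precisely with the quantity computed in the lemma. Once that identification is made, no further estimation or case analysis is needed and the result follows immediately.
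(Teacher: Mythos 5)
Your proposal is correct. It reaches the same key identity as the paper --- namely $\sum_{d\mid m}\mu(m/d)\,d\,\delta_{d\mid(k-r)}=\varphi(m)\mu(m/(k-r,m))/\varphi(m/(k-r,m))$, i.e.\ Lemma \ref{Molemma} --- but by a slightly different route. The paper does not touch the definition of $W_n(r,m)$ directly: it M\"obius-inverts the relation $m\left[\begin{smallmatrix}n\\ r\end{smallmatrix}\right]_m(a)=\sum_{d\mid m}W_n(r,d)$ from Theorem \ref{Maintheorem} to get $W_n(r,m)=\sum_{d\mid m}\mu(m/d)\,d\left[\begin{smallmatrix}n\\ r\end{smallmatrix}\right]_d(a)$, substitutes $\left[\begin{smallmatrix}n\\ r\end{smallmatrix}\right]_d(a)=\sum_k\binom nka^k\delta_{d\mid k-r}$, and swaps sums. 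You instead expand $(1+a\zeta_m^l)^n$ binomially in the definition of $W_n(r,m)$, recognize the inner sum as the Ramanujan sum $c_m(k-r)$, and rederive von Sterneck's evaluation $c_m(c)=\sum_{e\mid m}\mu(m/e)\,e\,\delta_{e\mid c}$ by expanding the coprimality indicator as $\sum_{d\mid(l,m)}\mu(d)$. The two arguments are two faces of the same computation, but yours is self-contained (it does not rely on the divisor decomposition in Theorem \ref{Maintheorem} or on M\"obius inversion), at the cost of redoing the standard Ramanujan-sum evaluation that the paper effectively inherits for free from its earlier result. Your reindexing $d\leftrightarrow m/d$ and the geometric-sum evaluation are both handled correctly, so no gap remains.
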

 \begin{proof}
   By Theorem \ref{Maintheorem}, Lemma \ref{Molemma}  and M$\ddot{\textup{o}}$bius Inversion Theorem, we have
   \begin{align*}
   W_{n}(r,m)
   &=\sum\limits_{d\mid m}\mu(\frac md)d\left[\begin{array}{c}n \\r\\\end{array}\right]_d(a)\\
   &=\sum\limits_{d\mid m}\mu(\frac md)d\sum\limits_{k=0}^n\binom nka^k\delta_{d\mid k-r} \\
   &= \sum\limits_{k=0}^n\binom nka^k\sum\limits_{d\mid m}\mu(\frac md)d\delta_{d\mid k-r}\\
   &=\varphi(m)\sum\limits_{k=0}^n\frac{\mu(m/(k-r,m))}{\varphi(m/(k-r,m))}\binom nka^k.
   \end{align*}
 \end{proof}

   \begin{corollary}\label{Wcorollary}
     Let $m,n$ be two  relatively prime positive integers. Then we have
      \begin{equation*}
      W_{n}(0,m)-\varphi(m)-\mu(m)a^n=\varphi(m)n\sum\limits_{k=1}^{n-1}\frac{\mu(m/(m,k))}{\varphi(m/(m,k))}\binom{n-1}{k-1}\frac{a^k}{k}
     \end{equation*}
     and
      \begin{equation*}
      W_{n}(n,m)-\varphi(m)a^n-\mu(m)=\varphi(m)n\sum\limits_{k=1}^{n-1}\binom{n-1}{k-1}\frac{\mu(m/(m,k))}{\varphi(m/(m,k))}\frac{a^{n-k}}{k}.
      \end{equation*}
   \end{corollary}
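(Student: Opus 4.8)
The plan is to derive both identities directly from Theorem \ref{Wtheorem} by specializing $r$, peeling off the two boundary terms, and rewriting the remaining binomial coefficients. For the first identity I would set $r=0$ in Theorem \ref{Wtheorem}, obtaining
\[
W_{n}(0,m)=\varphi(m)\sum_{k=0}^n\frac{\mu(m/(k,m))}{\varphi(m/(k,m))}\binom nk a^k.
\]
The key observation is that the coprimality hypothesis $(m,n)=1$ makes the endpoints explicit. For $k=0$ we use the convention $(0,m)=m$, so $m/(0,m)=1$ and the factor $\mu(1)/\varphi(1)=1$, giving the term $\varphi(m)$; for $k=n$ we have $(n,m)=1$, so $m/(n,m)=m$ and the factor is $\mu(m)/\varphi(m)$, giving the term $\mu(m)a^n$. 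These are precisely the two quantities subtracted on the left-hand side.

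Having isolated the interior sum over $1\le k\le n-1$, the remaining work is purely formal: I would invoke the elementary identity $\binom nk=\frac nk\binom{n-1}{k-1}$ to convert $\binom nk a^k$ into $n\binom{n-1}{k-1}\frac{a^k}{k}$, after which the interior sum matches the claimed right-hand side verbatim. This establishes the first equation.

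For the second identity I would instead set $r=n$ in Theorem \ref{Wtheorem}, so that the summand involves $(k-n,m)$. Again $(m,n)=1$ pins down the endpoints: at $k=n$ we get $(0,m)=m$, hence the factor $1$ and the term $\varphi(m)a^n$, while at $k=0$ we get $(-n,m)=(n,m)=1$, hence the factor $\mu(m)/\varphi(m)$ and the term $\mu(m)$. After removing these, I would perform the substitution $k\mapsto n-k$ on the interior sum, using $(k-n,m)=(n-k,m)$ and $\binom{n}{k}=\binom{n}{n-k}$ to bring it into a form symmetric to the first case, and then apply the same binomial identity $\binom nk=\frac nk\binom{n-1}{k-1}$.

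I do not anticipate a genuine obstacle here, since the statement is a direct corollary of Theorem \ref{Wtheorem}; the only points demanding care are the gcd conventions at the boundary (in particular $(0,m)=m$ and the use of $(m,n)=1$ to force $(n,m)=1$) and the bookkeeping in the index reversal $k\mapsto n-k$ for the second equation. Everything else reduces to the single combinatorial identity $\binom nk=\frac nk\binom{n-1}{k-1}$.
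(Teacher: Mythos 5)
Your proposal is correct and follows essentially the same route as the paper: specialize $r=0$ and $r=n$ in Theorem \ref{Wtheorem}, peel off the boundary terms $k=0$ and $k=n$ using $(m,n)=1$, and apply $\binom nk=\frac nk\binom{n-1}{k-1}$ (the paper states only this last step, leaving the boundary bookkeeping and the index reversal $k\mapsto n-k$ implicit, which you correctly supply).
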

   \begin{proof}
     Since $\binom nk=\frac nk\binom{n-1}{k-1}$ for $1\leq k\leq n$, we can derive the results by setting $r=0,n$ respectively in Theorem \ref{Wtheorem},
   \end{proof}
   \begin{corollary}\label{Wpcorlllary}
      Let $m\in\mathbb{Z}^+$ and $p$ be an odd prime not dividing $am$. Then we have

      \begin{equation*}
      \frac{W_{p}(0,m)- \varphi(m)-\mu(m)a^p}{p}\equiv-\varphi(m)\sum\limits_{k=1}^{p-1}\frac{\mu(m/(m,k))}{\varphi(m/(m,k))}\cdot\frac{(-a)^k}{k}\pmod p,
      \end{equation*}
       and
       \begin{equation*}
      \frac{W_{p}(p,m)-\varphi(m)a^p-\mu(m)}{p}\equiv\varphi(m)\sum\limits_{k=1}^{p-1}\frac{\mu(m/(m,k))}{\varphi(m/(m,k))}\cdot\frac{1}{k(-a)^{k-1}}\pmod p.
      \end{equation*}
   \end{corollary}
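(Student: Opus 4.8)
The plan is to obtain both congruences directly from Corollary~\ref{Wcorollary} by specializing to $n=p$ and reducing the resulting exact identities modulo $p$. Since $p$ is an odd prime with $p\nmid m$, we have $(m,p)=1$, so Corollary~\ref{Wcorollary} applies with $n=p$ and yields the two exact identities
$$W_{p}(0,m)-\varphi(m)-\mu(m)a^p=\varphi(m)\,p\sum_{k=1}^{p-1}\frac{\mu(m/(m,k))}{\varphi(m/(m,k))}\binom{p-1}{k-1}\frac{a^k}{k}$$
and
$$W_{p}(p,m)-\varphi(m)a^p-\mu(m)=\varphi(m)\,p\sum_{k=1}^{p-1}\binom{p-1}{k-1}\frac{\mu(m/(m,k))}{\varphi(m/(m,k))}\frac{a^{p-k}}{k}.$$
Dividing each through by $p$ isolates exactly the quantities whose residues modulo $p$ we must compute.

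The main tool for the reduction is the elementary congruence $\binom{p-1}{k-1}\equiv(-1)^{k-1}\pmod p$ for $1\le k\le p-1$, which follows from $p-i\equiv-i\pmod p$ in the numerator of $\binom{p-1}{k-1}$. Before applying it I would check that the congruences are $p$-integrally meaningful: because $d:=m/(m,k)$ divides $m$ we have $\varphi(d)\mid\varphi(m)$, so each coefficient $\varphi(m)\mu(d)/\varphi(d)$ is an integer, and the only remaining denominators are the factors $k$ with $1\le k\le p-1$, all prime to $p$. Hence every term on the right is $p$-integral, and the integer on the left of each identity is genuinely divisible by $p$, so the division by $p$ and the subsequent reduction are legitimate.

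For the first identity, replacing $\binom{p-1}{k-1}$ by $(-1)^{k-1}$ and using $(-1)^{k-1}a^k=-(-a)^k$ turns the sum into $-\varphi(m)\sum_{k=1}^{p-1}\frac{\mu(m/(m,k))}{\varphi(m/(m,k))}\frac{(-a)^k}{k}$, which is the claimed right-hand side. For the second, I would additionally invoke Fermat's little theorem in the form $a^{p-1}\equiv1\pmod p$, giving $a^{p-k}\equiv a^{1-k}\pmod p$; then $(-1)^{k-1}a^{1-k}=1/(-a)^{k-1}$ converts the sum into $\varphi(m)\sum_{k=1}^{p-1}\frac{\mu(m/(m,k))}{\varphi(m/(m,k))}\frac{1}{k(-a)^{k-1}}$, as required. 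There is no substantive obstacle here; the only points deserving care are the bookkeeping of signs and the verification that the division by $p$ is valid, which the $p$-integrality remark above settles.
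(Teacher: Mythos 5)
Your proposal is correct and follows essentially the same route as the paper: specialize Corollary \ref{Wcorollary} to $n=p$, divide by $p$, and reduce using $\binom{p-1}{k-1}\equiv(-1)^{k-1}\pmod p$ (together with $a^{p-k}\equiv a^{1-k}\pmod p$ for the second congruence). Your additional remarks on $p$-integrality only make explicit what the paper leaves implicit.
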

   \begin{proof}
    Since $\binom{p-1}{k}=(-1)^k$ for $0\leq k\leq p-1$, the results follow from Corollary \ref{Wcorollary}.
   \end{proof}

 \section{Some New Congruences}
 In this section, we give some new congruences by using the results of \cite{dy}.
 \begin{lemma}\label{3uvlemma}
   Let $p\nmid 3a(2-a)(a^3+1)$ be and odd prime, and $\{u_n\}_{n\geq0},\{v_n\}_{n\geq0}$ be the Lucas sequences defined as
    $$u_0=0,\;u_1=1,\;u_{n+1}=(2-a)u_n-(a^2-a+1)u_{n-1}\;\textup{for}\;n\geq1;$$
    $$v_0=2,\;v_1=(2-a),\;v_{n+1}=(2-a)v_n-(a^2-a+1)v_{n-1}\;\textup{for}\;n\geq1.$$
    Then we have: \\
   \begin{description}
    \item[(1)]
    \[  \frac{u_p-\left(\frac{-3}{p}\right)}{p}\equiv\sum\limits_{k=1}^{\frac{p-1}2}\frac{(-3)^{k-1}}{2k-1}\cdot\left(\frac{a}{2-a}\right)^{2k-2}
    +\left(\frac{-3}{p}\right)\left(q_p(a)-q_p(2)+\frac12q_p(3)\right)\pmod p;\]
   \item[(2)]
    \[ \quad\frac{v_{p}-(2-a)}{p}\equiv
    (2-a)\left[-\frac12\sum\limits_{k=1}^{\frac{p-1}2}\frac{(-3)^k}{k}\cdot\left(\frac{a}{2-a}\right)^{2k}-q_p(2)+q_p(2-a)\right]\pmod p.\]
   \end{description}
 \end{lemma}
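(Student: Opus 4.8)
The plan is to recognize $\{u_n\},\{v_n\}$ as the Lucas sequences attached to the quadratic $A_3(x)=x^2-(2-a)x+(a^2-a+1)$, whose roots are exactly $\alpha=1+a\zeta_3$ and $\beta=1+a\zeta_3^2$. Writing $P=2-a$, $Q=a^2-a+1$ and $D=P^2-4Q$, a short computation gives $D=-3a^2$, so that $\alpha,\beta=\tfrac{P\pm\sqrt D}{2}$ with $\sqrt D=a\sqrt{-3}$, $\left(\frac Dp\right)=\left(\frac{-3}{p}\right)$, and the crucial identity $D/P^2=-3\left(\frac{a}{2-a}\right)^2$. (In the language of Theorem \ref{Maintheorem} one has $v_n=W_n(0,3)$, which ties the lemma to the $m=3$ case, but the cleanest route is to work with $\alpha,\beta$ directly.) Since $v_p\equiv 2-a$ and $u_p\equiv\left(\frac{-3}{p}\right)\pmod p$, both left-hand sides are genuine $p$-adic quotients, and it suffices to compute $v_p$ and $u_p$ modulo $p^2$.

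First I would write down the Waring expansions coming from the binomial expansions of $(P+\sqrt D)^p\pm(P-\sqrt D)^p$ (recalling $v_p=\alpha^p+\beta^p$ and $u_p=(\alpha^p-\beta^p)/\sqrt D$), namely
\[ v_p=\frac{1}{2^{p-1}}\sum_{i=0}^{(p-1)/2}\binom{p}{2i}P^{p-2i}D^{i},\qquad u_p=\frac{1}{2^{p-1}}\sum_{i=0}^{(p-1)/2}\binom{p}{2i+1}P^{p-1-2i}D^{i}. \]
In each sum exactly one binomial coefficient is not divisible by $p$: the term $i=0$ for $v_p$ (giving $P^p$) and the term $i=(p-1)/2$ for $u_p$ (giving $D^{(p-1)/2}$). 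For every other index I would use $\binom{p}{s}=\frac ps\binom{p-1}{s-1}\equiv\frac ps(-1)^{s-1}\pmod{p^2}$, so those terms contribute $p$ times an explicit sum. The whole computation then splits into (a) the single ``extreme'' term, which after dividing by $p$ produces the Fermat-quotient contributions, and (b) the remaining terms, which after dividing by $p$ and reducing $\binom{p-1}{s-1}\equiv(-1)^{s-1}$, $P^{p-1}\equiv1\pmod p$ produce the stated finite sums in the variable $D/P^2$.

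For the extreme terms I would expand with Fermat quotients. Since $2^{-(p-1)}\equiv1-p\,q_p(2)$ and $P^{p}=P\,(1+p\,q_p(P))$, the $v$-term gives $2^{-(p-1)}P^p\equiv(2-a)\bigl(1+p(q_p(2-a)-q_p(2))\bigr)$, whence $\frac{v_p-(2-a)}{p}$ picks up $(2-a)(q_p(2-a)-q_p(2))$; the remaining terms give $-\frac{2-a}{2}\sum_{i=1}^{(p-1)/2}\frac1i\left(\frac{D}{P^2}\right)^i$, and substituting $D/P^2=-3(a/(2-a))^2$ yields part (2). For $u_p$ the extreme term is $2^{-(p-1)}D^{(p-1)/2}$; here I would use the square-root expansion $D^{(p-1)/2}\equiv\left(\frac Dp\right)\bigl(1+\tfrac p2 q_p(D)\bigr)\pmod{p^2}$ together with $q_p(D)=q_p(-3a^2)=q_p(3)+2q_p(a)$, so that $\frac{u_p-(\frac{-3}{p})}{p}$ picks up $\left(\frac{-3}{p}\right)(q_p(a)-q_p(2)+\tfrac12q_p(3))$; the remaining terms, reindexed by $k=i+1$, give $\sum_{k=1}^{(p-1)/2}\frac{1}{2k-1}\left(\frac{D}{P^2}\right)^{k-1}=\sum_{k=1}^{(p-1)/2}\frac{(-3)^{k-1}}{2k-1}\left(\frac{a}{2-a}\right)^{2k-2}$, which is part (1).

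The genuinely delicate step is the mod-$p^2$ evaluation of the extreme term for $u_p$: extracting the factor $\tfrac12$ in $D^{(p-1)/2}\equiv(\frac Dp)(1+\tfrac p2 q_p(D))$ from $(D^{(p-1)/2})^2=D^{p-1}=1+p\,q_p(D)$, and then correctly decomposing $q_p(-3a^2)=q_p(3)+2q_p(a)$, is exactly where the coefficients $q_p(a)$ and $\tfrac12q_p(3)$ of part (1) originate, so sign or normalization errors there would corrupt the whole formula. Everything else is careful bookkeeping: keeping track of the Fermat quotients $q_p(2),q_p(2-a),q_p(a),q_p(3)$, the single reduction $\binom{p-1}{s-1}\equiv(-1)^{s-1}$, and the reindexing that turns $D/P^2$ into $-3(a/(2-a))^2$. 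This matches the route taken in \cite{dy}, whose Theorems 4.10 and 5.4 one may alternatively invoke to shorten the argument.
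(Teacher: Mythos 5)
Your proposal is correct and follows essentially the same route as the paper: the same Binet/binomial expansion of $2^{p-1}u_p$ and $2^{p-1}v_p$ with $\sqrt D=a\sqrt{-3}$ (note $1+a\zeta_3=\frac{2-a}{2}+\frac a2\sqrt{-3}$, so your $\alpha,\beta$ are exactly the paper's), the same isolation of the single non-$p$-divisible term, and the same reduction $\binom{p}{s}=\frac ps\binom{p-1}{s-1}\equiv\frac ps(-1)^{s-1}$. The only difference is that you carry out the final Fermat-quotient bookkeeping (including the mod-$p^2$ square-root expansion of $D^{(p-1)/2}$) explicitly, where the paper delegates it to Lemma 2.6(1) of \cite{dy}; your computation agrees with what that lemma delivers.
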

 \begin{proof}
 By Lemmas  2.1 and 2.2 of \cite{dy}, we have
 $u_p=\frac1{a\sqrt{-3}} \left[\left(\frac{2-a}{2}+\frac a2\sqrt{-3}\right)^p- \left(\frac{2-a}{2}-\frac a2\sqrt{-3}\right)^p\right]$
 ,  $v_p= \left(\frac{2-a}{2}+\frac a2\sqrt{-3}\right)^p+ \left(\frac{2-a}{2}-\frac a2\sqrt{-3}\right)^p$, and $u_p\equiv\left(\frac{-3}{p}\right)\pmod p$,  $v_p=(2-a)u_p-2(a^2-a+1)u_{p-1} =2u_{p+1}-(2-a)u_p\equiv(2-a)\pmod p$. Then
  \begin{align*}
   2^{p-1}u_p
   &=\sum_{\substack{k=0\\k\; odd}}^{p}\binom pk(2-a)^{p-k}(a\sqrt{-3})^{k-1}\\
   &=a^{p-1}(-3)^{\frac{p-1}{2}}+\sum\limits_{k=1}^{\frac{p-1}{2}}\binom p{2k-1}(2-a)^{p-2k+1}a^{2k-2}(-3)^{k-1}\\
   &=a^{p-1}(-3)^{\frac{p-1}{2}}+p\sum\limits_{k=1}^{\frac{p-1}{2}}\frac{(-3)^{k-1}}{2k-1}\binom {p-1}{2k-2}(2-a)^{p-2k+1}a^{2k-2}\\
   &\equiv a^{p-1}(-3)^{\frac{p-1}{2}}+ p\sum\limits_{k=1}^{\frac{p-1}{2}}\frac{(-3)^{k-1}}{2k-1}\cdot\left(\frac{a}{2-a}\right)^{2k-2}\pmod {p^2},\\
 \end{align*}
 and
 \begin{align*}
   2^{p-1}v_p
   &=\sum_{\substack{k=0\\k\; even}}^{p}\binom pk(2-a)^{p-k}(a\sqrt{-3})^k\\
   &=(2-a)^p+\sum\limits_{k=1}^{\frac{p-1}{2}}\binom p{2k}(2-a)^{p-2k}a^{2k}(-3)^k\\
   &=(2-a)^p+p\sum\limits_{k=1}^{\frac{p-1}{2}}\frac{(-3)^k}{2k}\binom {p-1}{2k-1}(2-a)^{p-2k}a^{2k}\\
   &\equiv(2-a)^p-\frac{2-a}2 p\sum\limits_{k=1}^{\frac{p-1}{2}}\frac{(-3)^k}{k}\cdot\left(\frac{a}{2-a}\right)^{2k}\pmod {p^2}.\\
 \end{align*}
Hence (1) and (2) follow from Lemma 2.6(1) of \cite{dy}.
 \end{proof}
 \begin{corollary}\label{30corollary}
    Let $p\nmid 3a(2-a)(a^3+1)$ be and odd prime. Then we have

    \[\sum\limits_{k=1}^{[\frac{p}{3}]}\frac{(-a)^{3k}}{k}\equiv(2-a)\left[\frac12\sum\limits_{k=1}^{\frac{p-1}2}\frac{(-3)^k}{k}\cdot\left(\frac{a}{2-a}\right)^{2k}
    +q_p(2)-q_p(2-a)\right]-(a+1)q_p(a+1)\pmod p.\]

 \end{corollary}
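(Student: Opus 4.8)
The plan is to specialize Corollary~\ref{Wpcorlllary} to $m=3$ and translate the resulting identity into a statement about $v_p$ via Lemma~\ref{3uvlemma}. For $m=3$ we have $\varphi(3)=2$ and $\mu(3)=-1$, and the weight $\mu(3/(3,k))/\varphi(3/(3,k))$ equals $-\tfrac12$ when $3\nmid k$ and $1$ when $3\mid k$. Writing $S=\sum_{k=1}^{p-1}(-a)^k/k$ and $T=\sum_{3\mid k,\,1\le k\le p-1}(-a)^k/k$ and splitting the sum of Corollary~\ref{Wpcorlllary} according to whether $3\mid k$, the right-hand side equals $-2\big(-\tfrac12(S-T)+T\big)=S-3T$. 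Since $T=\tfrac13\sum_{j=1}^{[p/3]}(-a)^{3j}/j$ (here $[(p-1)/3]=[p/3]$ because $p\neq 3$), Corollary~\ref{Wpcorlllary} reads
\[
\frac{W_{p}(0,3)-2+a^{p}}{p}\equiv S-\sum_{k=1}^{[p/3]}\frac{(-a)^{3k}}{k}\pmod p .
\]
Thus it remains only to evaluate $W_p(0,3)$ and $S$ modulo $p$.

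Next I would identify $W_{p}(0,3)$ with $v_{p}$. The roots of $A_{3}(x)=x^{2}-(2-a)x+a^{2}-a+1$ are precisely $1+a\zeta_{3}=\tfrac{2-a}{2}+\tfrac a2\sqrt{-3}$ and $1+a\zeta_{3}^{2}=\tfrac{2-a}{2}-\tfrac a2\sqrt{-3}$, which are exactly the two numbers whose powers define $v_{n}$ in the proof of Lemma~\ref{3uvlemma}. Hence $W_{n}(0,3)=(1+a\zeta_{3})^{n}+(1+a\zeta_{3}^{2})^{n}=v_{n}$ for every $n$, and in particular $W_{p}(0,3)=v_{p}$.

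To compute $S$, I would use the standard congruence $\sum_{k=1}^{p-1}(-1)^{k-1}y^{k}/k\equiv\big((1+y)^{p}-1-y^{p}\big)/p\pmod p$, which follows from $\tfrac1p\binom pk=\tfrac1k\binom{p-1}{k-1}\equiv(-1)^{k-1}/k$. Taking $y=a$ turns the left side into $-S$, while expanding $(1+a)^{p}$ and $a^{p}$ through the Fermat quotients gives $\big((1+a)^{p}-1-a^{p}\big)/p=(a+1)q_p(a+1)-aq_p(a)$; hence $S\equiv aq_p(a)-(a+1)q_p(a+1)\pmod p$. Note this uses $p\nmid a(a+1)$, which is guaranteed by $p\nmid 3a(2-a)(a^3+1)$ since $a^3+1=(a+1)(a^2-a+1)$.

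Finally I would assemble the pieces. Writing $\frac{v_{p}-2+a^{p}}{p}=\frac{v_{p}-(2-a)}{p}+aq_p(a)$ and substituting the value of $S$ into the boxed congruence, the two copies of $aq_p(a)$ cancel, leaving
\[
\sum_{k=1}^{[p/3]}\frac{(-a)^{3k}}{k}\equiv-(a+1)q_p(a+1)-\frac{v_{p}-(2-a)}{p}\pmod p .
\]
Inserting the evaluation of $\frac{v_{p}-(2-a)}{p}$ from Lemma~\ref{3uvlemma}(2) then yields exactly the claimed congruence. The only genuine bookkeeping lies in the first step—correctly reading off the M\"obius weights for $m=3$ and matching the truncation $[(p-1)/3]=[p/3]$—after which everything reduces to the clean cancellation of $aq_p(a)$.
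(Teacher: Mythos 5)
Your proof is correct, but it takes a genuinely different route from the paper's. The paper disposes of Corollary \ref{30corollary} in one line by citing Lemma 4.9 of the companion paper \cite{dy} (which supplies the identity $\sum_{k=1}^{[p/3]}(-a)^{3k}/k\equiv-(a+1)q_p(a+1)-\frac{v_p-(2-a)}{p}\pmod p$) and then substituting the evaluation of $\frac{v_p-(2-a)}{p}$ from Lemma \ref{3uvlemma}(2). You instead rederive that intermediate identity from scratch inside the present paper's own framework: specializing Corollary \ref{Wpcorlllary} to $m=3$, reading off the M\"obius weights $-\tfrac12$ (for $3\nmid k$) and $1$ (for $3\mid k$), recognizing $W_n(0,3)=(1+a\zeta_3)^n+(1+a\zeta_3^2)^n=v_n$ since $1+a\zeta_3^{\pm1}=\tfrac{2-a}{2}\pm\tfrac a2\sqrt{-3}$ are the roots of $A_3(x)$, and evaluating $\sum_{k=1}^{p-1}(-a)^k/k\equiv aq_p(a)-(a+1)q_p(a+1)$ by the standard $\tfrac1p\binom pk\equiv\tfrac{(-1)^{k-1}}k$ trick; all the individual steps check out, including the hypothesis checks $p\nmid a(a+1)$ and the truncation $[(p-1)/3]=[p/3]$, and the $aq_p(a)$ terms do cancel as you claim. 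What your approach buys is self-containedness --- the corollary no longer leans on an external lemma of \cite{dy}, and it exhibits the result as a direct application of the Section 2 machinery ($W_p(0,m)$ for $m=3$), which is arguably more in the spirit of the paper's stated program; what the paper's route buys is brevity and consistency with the rest of Section 3, where the results of \cite{dy} are used as black boxes throughout.
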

 \begin{proof}
    The result follows from   Lemma 4.9 of  \cite{dy} and Lemma \ref{3uvlemma}(2).
 \end{proof}
\begin{theorem}\label{31theorem}

Let $p\nmid 3a(a-1)(2-a)(a^3+1)$ be an odd prime, and $\{u_n\}_{n\geq0}$   be the Lucas sequence defined as
    $$u_0=0,\;u_1=1,\;u_{n+1}=(2-a)u_n-(a^2-a+1)u_{n-1}\;\textup{for}\;n\geq1.$$
\begin{description}
    \item[(1)] If $p\equiv1\pmod 3$,  we have
\[\frac{u_{p-1}}{p}\equiv-\frac{2}{a(a-1)}\sum\limits_{k=1}^{\frac{p-1}{3}}\frac{(-a)^{3k-1}}{3k-1}+\frac{a+1}{3a(a-1)}\left(q_p(a^2-a+1)-2q_p(a+1)\right)\pmod p\]
and
\begin{align*}
\sum\limits_{k=1}^{\frac{p-1}{3}}\frac{(-a)^{3k-1}}{3k-1}
&\equiv\frac{a(a-1)}{a-2}\sum\limits_{k-1}^{\frac{p-1}{2}}\frac{(-3)^{k-1}}{2k-1}\cdot\left(\frac{a}{2-a}\right)^{2k-2}\\
&+\frac{a(a-1)}{a-2} [q_p(a)-q_p(2)+\frac12q_p(3)]\\
&-\frac{1}{3}(a+1)q_p(a+1)- \frac{ a^2-a+1}{3(a-2)}q_p(a^2-a+1)  \pmod p.
\end{align*}
 \item[(2)] If $p\equiv2\pmod 3$,  we have
 \[\frac{u_{p+1}}{p}\equiv\frac{2(a^2-a+1)}{a(a-1)}\sum\limits_{k=1}^{\frac{p+1}{3}}\frac{(-a)^{3k-2}}{3k-2}-\frac{a^3+1}{3a(a-1)}\left(q_p(a^2-a+1)-2q_p(a+1)\right)\pmod p\]
 and
\begin{align*}
\sum\limits_{k=1}^{\frac{p+1}{3}}\frac{(-a)^{3k-2}}{3k-2}
&\equiv-\frac{a(a-1)}{a-2}\sum\limits_{k=1}^{\frac{p-1}{2}}\frac{(-3)^{k-1}}{2k-1}\cdot\left(\frac{a}{2-a}\right)^{2k-2}\\
&\quad+\frac{a(a-1)}{a-2} [q_p(a)-q_p(2)+\frac12q_p(3)]\\
&\quad-\frac{1}{3}(a+1)q_p(a+1)-\frac{ a^2-a+1}{3(a-2)}q_p(a^2-a+1)  \pmod p.
\end{align*}
\end{description}
\end{theorem}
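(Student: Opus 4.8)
The plan is to exploit that the characteristic roots of $\{u_n\}$ are the Eisenstein numbers $\alpha=1+a\zeta_3$ and $\beta=1+a\zeta_3^{2}$, so that $\alpha+\beta=2-a$, $\alpha\beta=a^2-a+1$, $\alpha-\beta=a\sqrt{-3}$, and hence $u_n=(\alpha^n-\beta^n)/(a\sqrt{-3})$, $v_n=\alpha^n+\beta^n$. Comparing with the definition of $W_n(k,3)$ in Theorem~\ref{Maintheorem} gives $v_n=W_n(0,3)$ and $u_n=(W_n(1,3)-W_n(2,3))/(3a)$, so that by Theorem~\ref{Wtheorem} with $m=3$ one has the combinatorial identity $u_n=\frac1a\bigl(\left[\begin{array}{c}n\\ 1\end{array}\right]_{3}(a)-\left[\begin{array}{c}n\\ 2\end{array}\right]_{3}(a)\bigr)$. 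Writing $T_r=\sum_{1\le k\le p-1,\ k\equiv r\,(3)}(-a)^k/k$, one checks that $\sum_{k=1}^{(p-1)/3}(-a)^{3k-1}/(3k-1)=T_2$ and $\sum_{k=1}^{(p+1)/3}(-a)^{3k-2}/(3k-2)=T_1$, so these are exactly the residue-class sums occurring in the statement.

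For the first congruence I would compute $u_{p\mp1}/p$ modulo $p$ in two ways. First, expanding $u_{p-1}$ (resp.\ $u_{p+1}$) directly through the identity above, with $\binom{p-1}{k}\equiv(-1)^k(1-pH_k)\pmod{p^2}$ where $H_k=\sum_{j\le k}1/j$ (resp.\ $\binom{p+1}{k}\equiv p(-1)^k/(k(k-1))\pmod{p^2}$ for $2\le k\le p-1$, together with the endpoint terms), and applying the cube-root-of-unity filter, expresses $u_{p\mp1}/p$ as a combination of the $T_r$ and $q_p(a)$. Second, Corollary~\ref{Wpcorlllary} with $m=3$ gives $u_p$ and $v_p$ modulo $p^2$ in terms of the $T_r$: its first formula treats $W_p(0,3)=v_p$, giving $\tfrac1p(v_p-2+a^p)\equiv\sum_{k=1}^{p-1}(-a)^k/k-3T_0$, and its second treats $W_p(p,3)$ (which is $W_p(1,3)$ or $W_p(2,3)$ according to $p\bmod 3$, with $W_p(1,3)=(3au_p-v_p)/2$ and $W_p(0,3)+W_p(1,3)+W_p(2,3)=0$), the reciprocal weights $1/(k(-a)^{k-1})$ there being turned into the $T_r$ by the reversal $k\mapsto p-k$, which yields $\sum_{k\equiv r}1/(k(-a)^{k-1})\equiv -T_{(p-r)\bmod 3}$. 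Feeding these into the Lucas identities $2(a^2-a+1)u_{p-1}=(2-a)u_p-v_p$ and $2u_{p+1}=v_p+(2-a)u_p$ from the proof of Lemma~\ref{3uvlemma} gives the second expression for $u_{p\mp1}/p$. Matching the two expressions, and using $T_0+T_1+T_2\equiv aq_p(a)-(a+1)q_p(a+1)\pmod p$, eliminates all but one of the residue sums and produces the first congruence.

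For the second congruence I would use the same two Lucas identities but now substitute the values of $u_p$ and $v_p$ modulo $p^2$ supplied by Lemma~\ref{3uvlemma}(1),(2), obtaining a further expression for $u_{p\mp1}/p$ in terms of the half-range sums $X=\sum(-3)^{k-1}(a/(2-a))^{2k-2}/(2k-1)$ and $Z=\sum(-3)^{k}(a/(2-a))^{2k}/k$. Equating this with the first congruence expresses $T_2$ (resp.\ $T_1$) through $X$ and $Z$; the sum $Z$ is then removed by Corollary~\ref{30corollary}, and collecting the Fermat-quotient terms leaves precisely the asserted identity.

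The step I expect to be the main obstacle is verifying the two cancellations that make the final congruences clean. In the first congruence the sums $T_0,T_1,T_2$ satisfy, through Corollary~\ref{Wpcorlllary} and the Lucas relations, a rank-deficient linear system, and one must check that $u_{p\mp1}/p$ depends on only one of them (equivalently, that matching the direct and the Lucas computations forces a relation between $T_0$ and $T_2$ modulo Fermat quotients); in the second, one must check that the even-index half-range sum $Z$ cancels completely against the contribution of Corollary~\ref{30corollary}, leaving only the odd-index sum $X$. Alongside this, the contributions of $q_p(2),q_p(3),q_p(a),q_p(a+1),q_p(2-a),q_p(a^2-a+1)$, entering through the factors $2-a$, $a+1$, $a^2-a+1$ and $2$, must recombine into the displayed rational coefficients. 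The cases $p\equiv1$ and $p\equiv2\pmod 3$ run in parallel, differing only in the sign $\left(\frac{-3}{p}\right)$, the interchange of $u_{p-1}$ and $u_{p+1}$, and which residue class $W_p(p,3)$ represents.
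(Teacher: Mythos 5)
There is a genuine gap, and it sits exactly at the point you flag as ``the main obstacle'': the two computations you propose to match are not independent, so the matching step produces no relation at all. Write $T_r=\sum_{1\le k\le p-1,\,k\equiv r\,(3)}(-a)^k/k$. Your ``second way'' (Corollary \ref{Wpcorlllary} with $m=3$, the identity $W_p(1,3)+W_p(2,3)=-W_p(0,3)$, the reversal $k\mapsto p-k$, and the Lucas identity $2(a^2-a+1)u_{p-1}=(2-a)u_p-v_p$) yields, for $p\equiv1\pmod 3$,
\begin{equation*}
\frac{u_{p-1}}{p}\equiv\frac{aq_p(a)+aT_0-T_1-(a-1)T_2}{a(a^2-a+1)}\pmod p .
\end{equation*}
Your ``first way'' (expanding $au_{p-1}=\left[\begin{smallmatrix}p-1\\ 1\end{smallmatrix}\right]_3(a)-\left[\begin{smallmatrix}p-1\\ 2\end{smallmatrix}\right]_3(a)$ with $\binom{p-1}{k}\equiv(-1)^k(1-pH_k)\pmod{p^2}$, then reducing the restricted harmonic sums $\sum_{k\equiv r}(-a)^kH_k$ back to the $T_s$ by interchanging the order of summation) gives \emph{exactly the same expression}: the main terms contribute $q_p(a)/(a^2-a+1)$ and the harmonic correction contributes $-(-aT_0+T_1+(a-1)T_2)/(a(a^2-a+1))$. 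This is forced: $\binom{p}{k}\equiv(-1)^{k-1}p/k\pmod{p^2}$ is itself a consequence of $\binom{p-1}{k}\equiv(-1)^k(1-pH_k)$ via Pascal's rule, so both routes encode the same congruential information about $(1+a\zeta_3^l)^{p-1}$. Matching them gives $0\equiv0$, and $T_0,T_1$ cannot be eliminated. Indeed the system you assemble --- Corollary \ref{Wpcorlllary}, the relation $T_0+T_1+T_2\equiv aq_p(a)-(a+1)q_p(a+1)$, Lemma \ref{3uvlemma} and Corollary \ref{30corollary} --- leaves two free parameters (essentially $X$ and $Z$), whereas the asserted congruences require reducing everything to one.

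The missing input is precisely what the paper imports as Lemma 2.7 of \cite{dy}: the evaluation of $v_{p-\left(\frac{-3}{p}\right)}$ modulo $p^2$ purely in terms of $q_p(a^2-a+1)$. This uses the arithmetic fact $p\mid u_{p-\left(\frac{-3}{p}\right)}$ at the $p^2$ level through the norm identity $v_n^2+3a^2u_n^2=4(a^2-a+1)^n$, information that no binomial expansion supplies; it is the extra linear relation (equivalent to $T_2-(a-1)T_1\equiv$ a combination of Fermat quotients) that collapses the three residue sums to one. The paper also chooses its linear combination cleverly: it writes $3a(a-1)u_{p-1}=2W_p(2,3)+(a+1)v_{p-1}$, so that the only combinatorial sum entering is $W_p(2,3)\equiv-(1+a)^p-3pT_2\pmod{p^2}$ (a single residue class) and the only other term is $v_{p-1}$, handled by that lemma; the first congruence then drops out at once, and the second follows by comparing the parallel expression for $u_p-\left(\frac{-3}{p}\right)$ with Lemma \ref{3uvlemma}(1). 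Your outline for the second congruence inherits the same problem, since it presupposes the first congruence and, moreover, eliminating $Z$ via Corollary \ref{30corollary} reintroduces $T_0$ rather than producing a formula in $X$ alone.
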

\begin{proof}
Since$(2-a)^2-4(a^2-a+1)=-3a^2$, we have $p\mid u_{p-\left(\frac{-3}{p}\right)}$  by Lemma 2.2 of \cite{dy}.
Let $\{v_n\}_{n\geq0}$ be the Lucas sequence define as Lemma \ref{3uvlemma}.

(1) By Lemma 2.1 and  Theorem 4.1 of \cite{dy}, we have $-(a+1)u_p+(a^2-a+1)u_{p-1}=3\left[\begin{array}{c}p \\ 2\\\end{array}\right] _{3}(a)-(1+a)^p$ and $v_{p-1}=2u_p-(2-a)u_{p-1}$. Thus by   Lemma 2.4  of \cite{dy},  we have
 \begin{align*}
 3a(a-1)u_{p-1}
 &=6\left[\begin{array}{c}p \\ 2\\\end{array}\right] _{3}(a)-2(1+a)^p+(a+1)v_{p-1}\\
 &\equiv-6p\sum\limits_{k-1}^{\frac{p-1}{3}}\frac{(-a)^{3k-1}}{3k-1}+(a+1)\left[(v_p-2)-2((a+1)^{p-1}-1)\right]\pmod{p^2}
 \end{align*}
 and
 \begin{align*}
 3a(a-1)(u_{p}-1)
 &=3(2-a)\left[\begin{array}{c}p \\ 2\\\end{array}\right] _{3}(a)-(2-a)(1+a)^p+(a^2-a+1)v_{p-1}-3a(a-1)\\
 &\equiv-3(2-a)p\sum\limits_{k=1}^{\frac{p-1}{3}}\frac{(-a)^{3k-1}}{3k-1}-(2-a)(1+a)((a+1)^{p-1}-1)\\
 &\quad+(a^2-a+1)(v_{p-1}-2) \pmod{p^2}.
 \end{align*}
 Thence by Lemma 2.7  of \cite{dy} and Lemma \ref{3uvlemma}(1),
 \[\frac{u_{p-1}}{p}\equiv-\frac{2}{a(a-1)}\sum\limits_{k=1}^{\frac{p-1}{3}}\frac{(-a)^{3k-1}}{3k-1}+\frac{a+1}{3a(a-1)}\left(q_p(a^2-a+1)-2q_p(a+1)\right)\pmod p,\]
 and
\begin{align*}
\sum\limits_{k=1}^{\frac{p-1}{3}}\frac{(-a)^{3k-1}}{3k-1}
&\equiv\frac{a(a-1)}{a-2}\sum\limits_{k-1}^{\frac{p-1}{2}}\frac{(-3)^{k-1}}{2k-1}\cdot\left(\frac{a}{2-a}\right)^{2k-2}\\
&+\frac{a(a-1)}{a-2} [q_p(a)-q_p(2)+\frac12q_p(3)]\\
&-\frac{1}{3}(a+1)q_p(a+1)- \frac{ a^2-a+1}{3(a-2)}q_p(a^2-a+1)  \pmod p.
\end{align*}
(2) By Lemma 2.1 and  Theorem 4.1 of \cite{dy}, we have $-u_{p+1}+(a+1)u_{p}=3\left[\begin{array}{c}p \\ 1\\\end{array}\right] _{3}(a)-(1+a)^p$ and $v_{p+1}=(2-a)u_{p+1}-2(a^2-a+1)u_p$. Thus by  Lemmas 2.4  of \cite{dy}, we have
 \begin{align*}
 -3a(a-1) u_{p+1}
&=6(a^2-a+1)\left[\begin{array}{c}p \\ 1\\\end{array}\right] _{3}(a)-2(a^2-a+1)(1+a)^p+(a+1)v_{p+1}\\
 &\equiv-6(a^2-a+1)p\sum\limits_{k=1}^{\frac{p+1}{3}}\frac{(-a)^{3k-2}}{3k-2}-2(a^2-a+1)(a+1)\left[(a+1)^{p-1}-1\right]\\
 &\quad+(a+1)\left[v_{p+1}-2(a^2-a+1)\right]\pmod{p^2}
\end{align*}
 and
 \begin{align*}
 -3a(a-1)(u_{p}+1)
 &=3(2-a)\left[\begin{array}{c}p \\ 1\\\end{array}\right] _{3}(a)- (2-a)(1+a)^p+ v_{p+1}-3a(a-1)\\
 &\equiv-3 (2-a )p\sum\limits_{k=1}^{\frac{p+1}{3}}\frac{(-a)^{3k-2}}{3k-2}- (2-a)(a+1)\left[(a+1)^{p-1}-1\right]\\
 &\quad  +v_{p+1}-2(a^2-a+1) \pmod{p^2}.
 \end{align*}
 Thence by  Lemma 2.7  of \cite{dy}  and  Lemma \ref{3uvlemma}(1),
 \[\frac{u_{p+1}}{p}\equiv\frac{2(a^2-a+1)}{a(a-1)}\sum\limits_{k=1}^{\frac{p+1}{3}}\frac{(-a)^{3k-2}}{3k-2}-\frac{a^3+1}{3a(a-1)}\left(q_p(a^2-a+1)-2q_p(a+1)\right)\pmod p,\]
 and
\begin{align*}
\sum\limits_{k=1}^{\frac{p+1}{3}}\frac{(-a)^{3k-2}}{3k-2}
&\equiv-\frac{a(a-1)}{a-2}\sum\limits_{k=1}^{\frac{p-1}{2}}\frac{(-3)^{k-1}}{2k-1}\cdot\left(\frac{a}{2-a}\right)^{2k-2}\\
&+\frac{a(a-1)}{a-2} [q_p(a)-q_p(2)+\frac12q_p(3)]\\
&-\frac{1}{3}(a+1)q_p(a+1)-\frac{ a^2-a+1}{3(a-2)}q_p(a^2-a+1)  \pmod p.
\end{align*}
\end{proof}
Set $a=-2$ in Corollary \ref{30corollary} and Theorem \ref{31theorem}, we have the following two corollaries.
 \begin{corollary}
   Let $p\neq3,7$ be and odd prime. Then we have
   \begin{description}
    \item[(1)]
        \[\sum\limits_{k=1}^{[\frac p3]}\frac{8^k}{k}\equiv\sum\limits_{k=1}^{\frac {p-1}{2}}\frac{2}{k}\cdot\left(-\frac34\right)^k-4q_p(2)\pmod p.\]
    \item[(2)]
     If $p\equiv1 \pmod 3$,
               \[\sum\limits_{k=1}^{\frac{p-1}{3}}\frac{8^k}{3k-1}\equiv4\sum\limits_{k=1}^{\frac {p-1}{2}}\frac{1}{2k-1}\cdot\left(-\frac34\right)^k-\frac 32q_p(3)+\frac 76q_p(7)\pmod p.\]
    If $p\equiv2 \pmod 3$,
               \[\sum\limits_{k=1}^{\frac{p+1}{3}}\frac{8^k}{3k-2}\equiv-8\sum\limits_{k=1}^{\frac {p-1}{2}}\frac{1}{2k-1}\cdot\left(-\frac34\right)^k-3q_p(2)+\frac 73q_p(7)\pmod p.\]
   \end{description}

 \end{corollary}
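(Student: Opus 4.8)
The plan is to derive both parts purely by specializing the general congruences already proved, setting $a=-2$ in Corollary~\ref{30corollary} for part (1) and in the two explicit sum-formulas of Theorem~\ref{31theorem} for part (2); no new idea is required beyond careful bookkeeping. First I would check that the hypotheses survive: with $a=-2$ we get $3a(a-1)(2-a)(a^3+1)=(-6)(-3)(4)(-7)=-504=-2^3\cdot3^2\cdot7$, so $p\nmid 3a(a-1)(2-a)(a^3+1)$ becomes $p\neq2,3,7$, and since $p$ is already odd this is exactly the stated restriction $p\neq3,7$ (the looser hypothesis of Corollary~\ref{30corollary}, $p\nmid 168=2^3\cdot3\cdot7$, excludes the same odd primes).

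Next I would record the elementary reductions that do all the work. Since $-a=2$, the numerators become $(-a)^{3k}=8^{k}$, $(-a)^{3k-1}=8^{k}/2$, and $(-a)^{3k-2}=8^{k}/4$, so each left-hand side equals $1$, $\tfrac12$, or $\tfrac14$ times the target sum, and I clear the fractional prefactor at the end. Because $\tfrac{a}{2-a}=-\tfrac12$, the factor $(-3)^{j}\bigl(\tfrac{a}{2-a}\bigr)^{2j}$ collapses to $\bigl(-\tfrac34\bigr)^{j}$; in the odd-denominator sums this yields $\bigl(-\tfrac34\bigr)^{k-1}$, which I rewrite as $-\tfrac43\bigl(-\tfrac34\bigr)^{k}$ so the exponent matches the $k$ in the statement. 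The Fermat-quotient bookkeeping simplifies sharply: $p$ odd forces $q_p(-1)=0$, hence $q_p(a)=q_p(-2)=q_p(2)$ and every term carrying the prefactor $a+1=-1$ drops out; moreover $2-a=4$ gives $q_p(2-a)=2q_p(2)$, and $a^2-a+1=7$.

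Assembling these, part (1) falls out of Corollary~\ref{30corollary}: the outer factor is $2-a=4$, the bracketed Fermat part is $q_p(2)-q_p(2-a)=-q_p(2)$, and the trailing $(a+1)q_p(a+1)$ term vanishes, so $4\bigl[\tfrac12\sum\tfrac1k(-\tfrac34)^{k}-q_p(2)\bigr]=\sum\tfrac2k(-\tfrac34)^{k}-4q_p(2)$, as claimed. For part (2) I substitute into the second displayed congruence of Theorem~\ref{31theorem}(1) when $p\equiv1\pmod3$ and of Theorem~\ref{31theorem}(2) when $p\equiv2\pmod3$. The leading coefficient is $\tfrac{a(a-1)}{a-2}=\tfrac{6}{-4}=-\tfrac32$; combined with the shift factor $-\tfrac43$ it produces the coefficient $+2$ (resp.\ $-2$) in front of $\sum\tfrac{1}{2k-1}(-\tfrac34)^{k}$ in the $p\equiv1$ (resp.\ $p\equiv2$) case, while the bracket $q_p(a)-q_p(2)+\tfrac12 q_p(3)$ reduces to $\tfrac12 q_p(3)$ and gives $-\tfrac34 q_p(3)$, the $a^2-a+1=7$ term gives $+\tfrac{7}{12}q_p(7)$, and the $(a+1)$ term again vanishes. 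Multiplying through by $2$ (resp.\ $4$) to clear the $\tfrac12$ (resp.\ $\tfrac14$) on the left then yields the two stated congruences.

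Since this is a one-parameter specialization, there is no genuine obstacle; the only real care lies in the rational-coefficient bookkeeping, especially the index shift $\bigl(-\tfrac34\bigr)^{k-1}=-\tfrac43\bigl(-\tfrac34\bigr)^{k}$ and the interplay of the prefactor $\tfrac{a(a-1)}{a-2}$ with the clearing factors $2$ and $4$. I would also flag that the $p\equiv2\pmod3$ congruence should read $-3q_p(3)$ rather than $-3q_p(2)$: specializing $\tfrac{a(a-1)}{a-2}\cdot\tfrac12 q_p(3)$ and multiplying by $4$ yields $-3q_p(3)$, consistent with the $-\tfrac32 q_p(3)$ of the $p\equiv1\pmod3$ case, so the printed $q_p(2)$ appears to be a typographical slip for $q_p(3)$.
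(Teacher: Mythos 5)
Your proposal is correct and is exactly the paper's route: the paper proves this corollary simply by setting $a=-2$ in Corollary \ref{30corollary} and Theorem \ref{31theorem}, and your bookkeeping (the factors $\tfrac12$, $\tfrac14$ from $(-a)^{3k-1}$, $(-a)^{3k-2}$, the identity $\bigl(-\tfrac34\bigr)^{k-1}=-\tfrac43\bigl(-\tfrac34\bigr)^{k}$, and the vanishing of the $q_p(a+1)$ terms) checks out. Your flagged correction is also right: the $p\equiv2\pmod 3$ congruence should end in $-3q_p(3)$, not $-3q_p(2)$, as one can confirm numerically at $p=5$, where the left side is $4$ and only the $q_p(3)$ version gives $4$ modulo $5$.
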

 \begin{corollary}\label{-2corollary}
Let $p\neq3,7$ be and odd prime,  and $\{u_n\}_{n\geq0}$   be the Lucas sequence defined as
    $$u_0=0,\;u_1=1,\;u_{n+1}=4u_n-7u_{n-1}\;\textup{for}\;n\geq1.$$
       Then,   if $p\equiv1\pmod 3$,
\[\frac{u_{p-1}}p\equiv-\frac{1}{6}\sum\limits_{k=1}^{\frac{p-1}{3}}\frac{8^k}{3k-1}-\frac{1}{18} q_p(7)\pmod p,\]
if $p\equiv2\pmod 3$,
\[\frac{u_{p+1}}p\equiv\frac{7}{12}\sum\limits_{k=1}^{\frac{p+1}{3}}\frac{8^k}{3k-2}+\frac{7}{18} q_p(7)\pmod p.\]
\end{corollary}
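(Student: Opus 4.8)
The plan is to obtain this corollary purely by specialization: set $a=-2$ in Theorem \ref{31theorem} and simplify. First I would check that the data match. The Lucas sequence in Theorem \ref{31theorem} has recurrence $u_{n+1}=(2-a)u_n-(a^2-a+1)u_{n-1}$; substituting $a=-2$ gives $2-a=4$ and $a^2-a+1=7$, so the recurrence becomes $u_{n+1}=4u_n-7u_{n-1}$, exactly the sequence in the statement. I would also verify that the hypothesis $p\nmid 3a(a-1)(2-a)(a^3+1)$ specializes correctly: at $a=-2$ this product equals $3\cdot(-2)\cdot(-3)\cdot 4\cdot(-7)=-504=-2^3\cdot 3^2\cdot 7$, so for an odd prime $p$ the condition is precisely $p\neq 3,7$, matching the hypothesis.

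Next I would substitute $a=-2$ into each congruence of Theorem \ref{31theorem} and collect the numerical coefficients. For the power term one has $(-a)^{3k-1}=2^{3k-1}=\tfrac12\cdot 8^k$ and $(-a)^{3k-2}=2^{3k-2}=\tfrac14\cdot 8^k$, which converts the sums $\sum (-a)^{3k-1}/(3k-1)$ and $\sum (-a)^{3k-2}/(3k-2)$ into $\sum 8^k/(3k-1)$ and $\sum 8^k/(3k-2)$ up to these constants; note that the summation ranges $\sum_{k=1}^{(p-1)/3}$ and $\sum_{k=1}^{(p+1)/3}$ carry over unchanged. Computing $a(a-1)=6$ then yields the leading coefficients $-\tfrac{2}{6}\cdot\tfrac12=-\tfrac16$ in case (1) and $\tfrac{2\cdot 7}{6}\cdot\tfrac14=\tfrac{7}{12}$ in case (2).

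Finally I would treat the Fermat-quotient terms. The key simplification is that $a+1=-1$, so $q_p(a+1)=q_p(-1)=((-1)^{p-1}-1)/p=0$ because $p$ is odd; thus every occurrence of $q_p(a+1)$ vanishes. The surviving quotient is $q_p(a^2-a+1)=q_p(7)$, with coefficient $\tfrac{a+1}{3a(a-1)}=-\tfrac{1}{18}$ in case (1) and $-\tfrac{a^3+1}{3a(a-1)}=\tfrac{7}{18}$ in case (2). Assembling these pieces reproduces both congruences verbatim. There is no genuine obstacle here: the argument is a direct substitution, and the only point requiring a moment's care is recording that the $q_p(a+1)$ contribution drops out precisely because $q_p(-1)=0$.
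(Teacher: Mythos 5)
Your proposal is correct and is exactly the paper's derivation: the text preceding the corollary states that it follows by setting $a=-2$ in Theorem \ref{31theorem}, and your coefficient computations ($a(a-1)=6$, $(-a)^{3k-1}=\tfrac12\cdot 8^k$, $(-a)^{3k-2}=\tfrac14\cdot 8^k$, $q_p(-1)=0$) all check out and reproduce the stated congruences. The verification that the hypothesis $p\nmid 3a(a-1)(2-a)(a^3+1)$ specializes to $p\neq 3,7$ is a nice touch the paper leaves implicit.
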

 The following theorem can reduce the summation terms occurring in the expression of Lucas quotients in Corollary 4.11 of \cite{dy} and    Corollary 3.5.
 \begin{theorem}
    Let $p\neq3,7$ be an odd prime, and $\left\{u_n\right\}_{n\geq0}$ be the Lucas sequence defined as Corollary 3.5.
Then if $p\equiv1\pmod3$,
\begin{align*}
\frac{u_{p-1}}p
&\equiv\frac16\sum\limits_{k=1}^{\frac{p-1}{6}}\frac{64^k}{k}+\frac13q_p(7)+\frac12q_p(3)\\
&\equiv-\frac{1}{3}\sum\limits_{k=1}^{\frac{p-1}{6}}\frac{64^k}{6k-1}-\frac{1}{18} q_p(7)+\frac 16q_p(3)  \pmod p,
\end{align*}
if $p\equiv2\pmod3$,
\begin{align*}
\frac{u_{p+1}}p
&\equiv-\frac76\sum\limits_{k=1}^{\frac{p-5}{6}}\frac{64^k}{k}-\frac73q_p(7)-\frac72q_(3)\\
&\equiv\frac{7}{6}\sum\limits_{k=1}^{\frac{p+1}{6}}\frac{64^k}{6k-2}+\frac{7}{18} q_p(7)+\frac 76q_p(3)\pmod p.
\end{align*}
\end{theorem}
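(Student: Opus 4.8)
The plan is to specialize the apparatus of Section~2 to $m=6$ and $a=-2$ and to feed it with the two mod-$3$ expressions for the Lucas quotients already available, namely Corollary~4.11 of \cite{dy} and Corollary~\ref{-2corollary}. For $a=-2$ one has $A_{6}(x)=x^{2}+3$, so the two roots $1+a\zeta_{6}^{l}$ are $\pm\sqrt{-3}$, whereas $A_{3}(x)=x^{2}-4x+7$ has roots $\alpha=2-\sqrt{-3}$, $\beta=2+\sqrt{-3}$, which are exactly the roots attached to $\{u_{n}\}$. The norm $\alpha\beta=7$ is what will produce $q_{p}(7)$, while $-3=(\sqrt{-3})^{2}$ will produce $q_{p}(3)$; the case split $p\equiv1$ versus $p\equiv2\pmod3$ enters through $\left(\frac{-3}{p}\right)=\left(\frac{p}{3}\right)$ and through which residues $\le p-1$ occur, which fixes the upper limits $(p-1)/6$, $(p-5)/6$, $(p+1)/6$.

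The two engines are elementary. First, splitting a residue class mod $3$ into its two sub-classes mod $6$ gives $\left[\begin{smallmatrix}p\\ r\end{smallmatrix}\right]_{3}(a)=\left[\begin{smallmatrix}p\\ r\end{smallmatrix}\right]_{6}(a)+\left[\begin{smallmatrix}p\\ r+3\end{smallmatrix}\right]_{6}(a)$. Second, applying the identity $m\left[\begin{smallmatrix}p\\ r\end{smallmatrix}\right]_{m}(a)=\sum_{d\mid m}W_{p}(r,d)$ of Theorem~\ref{Maintheorem} to $m=6$ at the arguments $r$ and $r+3$, and using $W_{p}(r+3,d)=W_{p}(r,d)$ for $d\in\{1,3\}$ while $W_{p}(r+3,d)=-W_{p}(r,d)$ for $d\in\{2,6\}$, yields the difference formula $\left[\begin{smallmatrix}p\\ r\end{smallmatrix}\right]_{6}(a)-\left[\begin{smallmatrix}p\\ r+3\end{smallmatrix}\right]_{6}(a)=\tfrac13\bigl(W_{p}(r,2)+W_{p}(r,6)\bigr)$. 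These two together let me pass between a single mod-$6$ sum and the mod-$3$ sums occurring in the imported corollaries.

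Next I would make every term explicit modulo $p^{2}$. For $1\le k\le p-1$ one has $\binom{p}{k}\equiv p\,\frac{(-1)^{k-1}}{k}\pmod{p^{2}}$, hence $\binom{p}{k}(-2)^{k}\equiv -p\,\frac{2^{k}}{k}$, so that up to the boundary terms $k=0,p$ each $\left[\begin{smallmatrix}p\\ r\end{smallmatrix}\right]_{6}(-2)$ is $p$ times a residue-class sum $\sum_{k\equiv r(6)}\frac{2^{k}}{k}$; writing $k=6j,\,6j-1,\,6j-2$ turns these into $\tfrac16\sum\frac{64^{j}}{j}$, $\tfrac12\sum\frac{64^{j}}{6j-1}$, $\tfrac14\sum\frac{64^{j}}{6j-2}$. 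On the other side $W_{p}(r,2)=(-1)^{r}3^{p}$, while for odd $p$ the identity $(-\sqrt{-3})^{p}=-(\sqrt{-3})^{p}$ collapses $W_{p}(r,6)$ to a single term $\pm(\sqrt{-3})^{p+1}=\pm(-3)^{(p+1)/2}$; together with $3^{p-1}=1+pq_{p}(3)$ and $(-3)^{(p-1)/2}\equiv\left(\frac{p}{3}\right)\bigl(1+\tfrac{p}{2}q_{p}(3)\bigr)\pmod{p^{2}}$, the right-hand side of the difference formula becomes a pure multiple of $p\,q_{p}(3)$. Thus the difference formula says precisely that replacing a mod-$3$ sum by the corresponding mod-$6$ sum costs only an explicit $q_{p}(3)$-term (the spurious $q_{p}(2)$ coming from the boundary term $k=p$ and from $2^{p}$ must, and will, cancel).

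Finally I would assemble. Taking Corollary~4.11 of \cite{dy} (the class-$0$ mod-$3$ expression, which carries $q_{p}(7)$) and substituting $\sum\frac{8^{k}}{k}\equiv\sum\frac{64^{k}}{k}+3q_{p}(3)$ (obtained from the difference formula at $r=0$ together with the split) yields the first displayed congruence in each case; taking Corollary~\ref{-2corollary}, splitting its sum $\sum\frac{8^{k}}{3k-1}$ by the parity of $k$ into the mod-$6$ classes $\equiv5$ and $\equiv2$ (resp.\ $\equiv4$ and $\equiv1$ when $p\equiv2$), and collapsing the extra class by the difference formula (here $\left[\begin{smallmatrix}p\\ 2\end{smallmatrix}\right]_{6}-\left[\begin{smallmatrix}p\\ 5\end{smallmatrix}\right]_{6}=\tfrac12pq_{p}(3)$) yields the second. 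I expect the main obstacle to be exactly the mod-$p^{2}$ evaluation of $W_{p}(r,6)$: the factors $\zeta_{6}^{-r}$, the sign of $(-\sqrt{-3})^{p}$, and the Legendre symbol $\left(\frac{p}{3}\right)$ must be tracked simultaneously and consistently through both residue cases, and one must verify that all $q_{p}(2)$ contributions cancel so that only $q_{p}(7)$, $q_{p}(3)$ and the single target $64^{k}$-sum survive.
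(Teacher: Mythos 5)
Your proposal is correct and follows essentially the same route as the paper: both convert the mod-$3$ sums appearing in Corollary 4.11 of \cite{dy} and Corollary \ref{-2corollary} into mod-$6$ sums by evaluating the alternating companion sum in closed form as $3^{p-1}\mp(-3)^{\frac{p-1}{2}}$ and expanding modulo $p^2$ to extract the $q_p(3)$ correction. The only difference is cosmetic: you derive that closed form from Theorem \ref{Maintheorem} with $m=6$, $a=-2$ (your difference $\left[\begin{smallmatrix}p\\ r\end{smallmatrix}\right]_{6}(-2)-\left[\begin{smallmatrix}p\\ r+3\end{smallmatrix}\right]_{6}(-2)$ is literally $\pm\left[\begin{smallmatrix}p\\ r\end{smallmatrix}\right]_{3}(2)$), whereas the paper cites Theorem 4.5 and Lemmas 2.4 and 2.6 of \cite{dy} to obtain the equivalent congruences $\sum_{k}\frac{(-8)^{k}}{3k-1}\equiv q_p(3)$ and $\sum_{k}\frac{(-8)^{k}}{3k-2}\equiv-2q_p(3)\pmod p$ directly.
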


 \begin{proof}
      By  Lemma 2.4 and Theorem 4.5  of \cite{dy}, if $p\equiv1\pmod 3$,
  \[3^{p-1}-(-3)^{\frac{p-1}{2}}=\left[\begin{array}{c}p \\2 \\\end{array}\right] _3(2)\equiv-p\sum\limits_{k=1}^{\frac{p-1}{3}}\frac{(-2)^{3k-1}}{3k-1}\pmod{p^2},\]
  if $p\equiv2\pmod 3$,
  \[3^{p-1}+(-3)^{\frac{p-1}{2}}=\left[\begin{array}{c}p \\1 \\\end{array}\right] _3(2)\equiv-p\sum\limits_{k=1}^{\frac{p+1}{3}}\frac{(-2)^{3k-2}}{3k-2}\pmod{p^2}.\]
   Thus by the Lemma 2.6 of \cite{dy}, we have
  \[ \sum\limits_{k=1}^{\frac{p-1}{3}}\frac{(-8)^{k}}{3k-1}\equiv q_p(3)\pmod p \;\textup{if} \;p\equiv1 \pmod 3,\] and
  \[ \sum\limits_{k=1}^{\frac{p+1}{3}}\frac{(-8)^{k}}{3k-2}\equiv-2q_p(3) \pmod p \;\textup{if} \;p\equiv2 \pmod 3.\]
  Hence the results follow from     Corollaries  4.7 and  4.11 of \cite{dy} and Corollary \ref{-2corollary}.
 \end{proof}
 \section{A Specific Lucas Sequence}
 \noindent Let $A,B\in\mathbb{Z}$. The Lucas sequences $u_n=u_n(A,B)(n\in\mathbb{N})$ and $v_n=v_n(A,B)(n\in\mathbb{N})$ are defined by
 \[u_0=1,\;u_1=1,\; u_{n+1}=Bu_n-Au_{n-1}(n\geq1);\]
  \[v_0=2,\;v_1=B,\; v_{n+1}=Bv_n-Av_{n-1}(n\geq1).\]
Next  we give some properties of  the Lucas sequences with $A=5$ and $B=2$. We need some lemmas. Let $D=B^2-4A.$
 \begin{lemma}\label{ulucasmod}
    Let $p$ be an odd prime not dividing $DA$.
  \begin{description}
    \item[(1)]
     If $p\equiv1\pmod 4$, then $p\mid u_{\frac{p-1}{4}}$ if and only if $v_{\frac{p-1}{2}}\equiv2A^{\frac{p-1}{4}}\pmod p$ and
      $p\mid v_{\frac{p-1}{4}}$ if and only if $v_{\frac{p-1}{2}}\equiv-2A^{\frac{p-1}{4}}\pmod p$.
    \item[(2)] If $p\equiv3\pmod 4$, then $p\mid u_{\frac{p+1}{4}}$ if and only if $v_{\frac{p+1}{2}}\equiv2A^{\frac{p+1}{4}}\pmod p$ and
       $p\mid v_{\frac{p+1}{4}}$ if and only if $v_{\frac{p+1}{2}}\equiv-2A^{\frac{p+1}{4}}\pmod p$.
  \end{description}
 \end{lemma}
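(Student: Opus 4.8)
The plan is to reduce both biconditionals, in each of the two cases, to a pair of classical duplication identities for the Lucas pair $(u_n,v_n)$ and then simply read off the divisibility statements modulo $p$. Let $\alpha,\beta$ be the roots of $x^2-Bx+A$, so that $\alpha\beta=A$ and $(\alpha-\beta)^2=D$, and use the Binet expressions $u_n=(\alpha^n-\beta^n)/(\alpha-\beta)$ and $v_n=\alpha^n+\beta^n$. The two identities I would record first are
$$v_{2n}-2A^n=D\,u_n^2\qquad\text{and}\qquad v_{2n}+2A^n=v_n^2,$$
both immediate from Binet: $D\,u_n^2=(\alpha^n-\beta^n)^2=\alpha^{2n}+\beta^{2n}-2A^n=v_{2n}-2A^n$, while $v_n^2=\alpha^{2n}+\beta^{2n}+2A^n=v_{2n}+2A^n$. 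These are integer identities, hence may be reduced modulo $p$.

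With these in hand the argument is uniform in the two cases. In case (1), $p\equiv1\pmod4$ makes $n:=(p-1)/4$ an integer with $2n=(p-1)/2$. From $v_{2n}-2A^n=D\,u_n^2$ I obtain
$$v_{(p-1)/2}\equiv 2A^{(p-1)/4}\pmod p\iff D\,u_{(p-1)/4}^2\equiv0\pmod p,$$
and since $p\nmid D$ (which follows from $p\nmid DA$) the right-hand side is equivalent to $p\mid u_{(p-1)/4}$; this is the first claim. Similarly $v_{2n}+2A^n=v_n^2$ gives
$$v_{(p-1)/2}\equiv -2A^{(p-1)/4}\pmod p\iff v_{(p-1)/4}^2\equiv0\pmod p\iff p\mid v_{(p-1)/4},$$
which is the second claim.

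Case (2) is proved verbatim with $n:=(p+1)/4$, which is an integer precisely because $p\equiv3\pmod4$, and again $2n=(p+1)/2$; the same two identities, together with the same use of $p\nmid D$, yield $p\mid u_{(p+1)/4}\iff v_{(p+1)/2}\equiv 2A^{(p+1)/4}$ and $p\mid v_{(p+1)/4}\iff v_{(p+1)/2}\equiv -2A^{(p+1)/4}$.

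I do not expect any genuine obstacle here: the entire content lies in the factorizations $v_{2n}-2A^n=D\,u_n^2$ and $v_{2n}+2A^n=v_n^2$. The only points needing care are purely bookkeeping — verifying that the condition on $p$ modulo $4$ makes the quarter-index an integer whose double is exactly $(p\mp1)/2$, and invoking $p\nmid D$ at the single step where one passes from $p\mid D\,u_n^2$ to $p\mid u_n$.
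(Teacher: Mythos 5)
Your proposal is correct and rests on exactly the identities the paper invokes, namely $v_{2n}=v_n^2-2A^n=Du_n^2+2A^n$; the paper's own proof is a one-line citation of this fact, and you have merely spelled out the routine reduction modulo $p$ (using $p\nmid D$) that the paper leaves implicit.
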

 \begin{proof}
  (1) and (2) follow from the fact that $v_{2n}=v_n^2-2A^n=Du_n^2+2A^n.$
 \end{proof}
 \begin{lemma}\textup{(\cite{sun4})}\label{uvlucasmod}
  Let $p$ be an odd prime and $A'$ be an integer such that $4A'\equiv B^2-4A\pmod p $. Let
  $u_n'=u_n(A',B),\;v_n'=v_n(A',B)$. Then we have
  \[
    u_{\frac{p+1}{2}}\equiv\frac12\left(\frac2p\right)v'_{\frac{p-1}{2}}\pmod p,\;u_{\frac{p-1}{2}}\equiv-\left(\frac2p\right)u'_{\frac{p-1}{2}}\pmod p,
   \]
  \[v_{\frac{p+1}{2}}\equiv\left(\frac2p\right)v'_{\frac{p+1}{2}}\pmod p,\;v_{\frac{p-1}{2}}\equiv2\left(\frac2p\right)u'_{\frac{p+1}{2}}\pmod p.
  \]
\end{lemma}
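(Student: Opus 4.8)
The plan is to pass to $\overline{\mathbb{F}_p}$ and to exploit a square-root duality between the two sequences. Write $\alpha,\beta$ for the roots of $x^2-Bx+A$, so $\alpha+\beta=B$, $\alpha\beta=A$, $(\alpha-\beta)^2=D$, and by Binet's formula $u_n=(\alpha^n-\beta^n)/(\alpha-\beta)$, $v_n=\alpha^n+\beta^n$. The hypothesis $4A'\equiv B^2-4A=D\pmod p$ gives $B^2-4A'\equiv 4A\pmod p$, so modulo $p$ the primed sequence has discriminant $\equiv 4A$ and roots $\alpha',\beta'=\tfrac{B}{2}\pm\sqrt{A}$. I would dispose of the degenerate cases $p\mid AD$ first by direct computation (in each of them one of $D,D'$ vanishes modulo $p$, so the relevant terms are elementary), and then assume $p\nmid 2AD$.

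The device is to introduce $s,t\in\overline{\mathbb{F}_p}$ with $s^2=\alpha$, $t^2=\beta$, the signs fixed so that $st=\sqrt{A}$. Then
$$(s+t)^2=\alpha+\beta+2\sqrt{A}=B+2\sqrt{A}=2\alpha',\qquad (s-t)^2=B-2\sqrt{A}=2\beta',$$
which is precisely the bridge between the two sequences. For $N=(p\pm1)/2$ one has ${\alpha'}^{N}=(s+t)^{2N}/2^{N}$ and ${\beta'}^{N}=(s-t)^{2N}/2^{N}$, and I would then apply the Frobenius identity $(s\pm t)^p=s^p\pm t^p$, valid in characteristic $p$, to collapse $(s+t)^{p\pm1}$ and $(s-t)^{p\pm1}$. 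The crucial point is that after expanding, every surviving monomial has the shape $s^{p\pm1}=\alpha^{(p\pm1)/2}$, $t^{p\pm1}=\beta^{(p\pm1)/2}$, or such a term multiplied by $st=\sqrt{A}$ (for instance $s^p t=\alpha^{(p-1)/2}\sqrt{A}$); no half-integer powers remain, so the four combinations reassemble into $u_{(p\pm1)/2}$ and $v_{(p\pm1)/2}$.

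Carrying this out, forming $(s+t)^{p-1}\pm(s-t)^{p-1}$ over the common denominator $(s+t)(s-t)=s^2-t^2=\sqrt{D}$ yields
$$v'_{(p-1)/2}=2^{(3-p)/2}\,u_{(p+1)/2},\qquad u'_{(p-1)/2}=-2^{-(p-1)/2}\,u_{(p-1)/2},$$
and the analogous computation with $(s\pm t)^{p+1}$ gives
$$v'_{(p+1)/2}=2^{-(p-1)/2}\,v_{(p+1)/2},\qquad u'_{(p+1)/2}=2^{-(p+1)/2}\,v_{(p-1)/2}.$$
Finally I would invoke Euler's criterion $2^{(p-1)/2}\equiv\left(\frac{2}{p}\right)\pmod p$, together with $\left(\frac{2}{p}\right)^{-1}=\left(\frac{2}{p}\right)$, to clear the powers of $2$ and recover the four stated congruences.

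The step needing the most care is the bookkeeping of the square roots. One must check that each final identity is independent of the sign choices for $s,t$: only the even powers $s^{p\pm1},t^{p\pm1}$ and the product $st$ actually occur, and interchanging the two square roots of $A$ merely swaps $\alpha'\leftrightarrow\beta'$, under which $v'_n$ is manifestly invariant while $u'_n=({\alpha'}^n-{\beta'}^n)/(\alpha'-\beta')$ is invariant as well, since $\alpha'-\beta'$ also changes sign. Confirming that no spurious factor of $\sqrt{A}$ or $\sqrt{D}$ is left over in any of the four relations is the main obstacle, but it is a finite and mechanical verification.
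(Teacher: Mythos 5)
Your argument is correct, and it is worth noting that the paper itself supplies no proof of this lemma at all: it is quoted verbatim from Sun's \emph{Values of Lucas Sequences Modulo Primes} \cite{sun4}, so your derivation is necessarily an independent one. The square-root device is sound: with $s^2=\alpha$, $t^2=\beta$, $st=\sqrt{A}$ one indeed gets $(s\pm t)^2=2\alpha',2\beta'$, and combining the Frobenius identity $(s\pm t)^p=s^p\pm t^p$ with the factorizations
$(s+t)^{p-1}+(s-t)^{p-1}=2u_{(p+1)/2}$ (after clearing the denominator $s^2-t^2=\alpha-\beta$), $(s+t)^{p-1}-(s-t)^{p-1}=-2st\,u_{(p-1)/2}$, $(s+t)^{p+1}+(s-t)^{p+1}=2v_{(p+1)/2}$ and $(s+t)^{p+1}-(s-t)^{p+1}=2st\,v_{(p-1)/2}$, I can confirm that all four of your intermediate identities
$v'_{(p-1)/2}=2^{(3-p)/2}u_{(p+1)/2}$, $u'_{(p-1)/2}=-2^{-(p-1)/2}u_{(p-1)/2}$, $v'_{(p+1)/2}=2^{-(p-1)/2}v_{(p+1)/2}$, $u'_{(p+1)/2}=2^{-(p+1)/2}v_{(p-1)/2}$
are exact in $\overline{\mathbb{F}_p}$, and Euler's criterion then yields precisely the four stated congruences. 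Your sign-independence check is the right thing to worry about and comes out as you say. Two small points deserve one extra sentence each in a final write-up: (i) the divisions by $s+t$, $s-t$, $s^2-t^2$ and $2st$ require $p\nmid AD$ (note $\alpha'\beta'=A'\equiv D/4$, so $p\nmid D$ already guarantees $s\pm t\neq 0$), and you correctly defer $p\mid AD$ to a direct computation, which does go through since in each degenerate case one of the two sequences degenerates to $u_n\equiv n(B/2)^{n-1}$, $v_n\equiv 2(B/2)^n$ or to pure powers; (ii) since all four quantities are rational integers, a congruence verified in $\overline{\mathbb{F}_p}$ (i.e., modulo a prime above $p$) descends to a congruence modulo $p$ in $\mathbb{Z}$. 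With those caveats made explicit, your proof is complete and self-contained, which is arguably an improvement over the bare citation in the paper.
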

  \begin{remark}
\begin{description}
    \item[(1)] Let $S_n=u_n(1,4),\;T_n=v_n(1,4)$. For any  prime $p>3$, by the facts that $u'_n=u_n(3,4)=\frac{1}{2}(3^n-1)$ and $v_n'=v_n(3,4)=3^n+1$,  we have
\begin{align*}
&S_{\frac{p+1}{2}}\equiv\frac12\left(\frac{2}{p}\right)\left[\left(\frac{3}{p}\right)+1\right]\pmod p,\quad
S_{\frac{p-1}{2}}\equiv-\frac12\left(\frac{2}{p}\right)\left[\left(\frac{3}{p}\right)-1\right]\pmod p,\\
&T_{\frac{p+1}{2}}\equiv \left(\frac{2}{p}\right)\left[3\left(\frac{3}{p}\right)+1\right]\pmod p,\quad
T_{\frac{p-1}{2}}\equiv\left(\frac{2}{p}\right)\left[3\left(\frac{3}{p}\right)-1\right]\pmod p.
\end{align*}

Thus by Lemma \ref{ulucasmod},   $p\mid S_{[\frac{p+1}{4}]}$ iff  $p\equiv1,19\pmod {24}$ and
$p\mid T_{[\frac{p+1}{4}]}$  iff  $p\equiv7,13\pmod {24}$. Sun  \cite{s2002} got these by studying the sum (\ref{generalsum}) for $a=1$ and $m=12$;

\item[(2)] Let $P_n=u_n(-1,2),\;Q_n=v_n(-1,2)$ and $u_n'=u_n(2,2),\;v_n=v_n'(2,2).$
For any odd prime $p$, by the facts that $u_{4n}'=0,u_{4n+1}'=(-4)^n,u_{4n+2}'=u_{4n+3}'=2(-4)^n$ and
$v_{4n}'=v_{4n+1}'=2(-4)^n,u_{4n+2}'=0,u_{4n+3}'=(-4)^{(n+1)}$, we have
\[P_{\frac{p-\left(\frac2p\right)}{2}}\equiv\begin{cases}0\pmod p,&  \textup{if}\,p\equiv1\pmod 4,\\(-1)^{[\frac{p+5}{8}]}2^{\frac{p-3}{4}}\pmod p,& \textup{if}\;p\equiv3\pmod 4,\end{cases}
\]

\[Q_{\frac{p-\left(\frac2p\right)}{2}}\equiv\begin{cases}(-1)^{[\frac{p}{8}]}2^{\frac{p+3}{4}}\pmod p,&   \textup{if}\;p\equiv1\pmod 4,\\0\pmod p,& \textup{if}\;p\equiv3\pmod 4,\end{cases}\]
and
\[P_{\frac{p+\left(\frac2p\right)}{2}}\equiv(-1)^{[\frac{p+1}{8}]}2^{[\frac{p }{4}]}\pmod p,\quad Q_{\frac{p+\left(\frac2p\right)}{2}}\equiv(-1)^{[\frac{p+5}{8}]}2^{[\frac{p+5}{4}]}\pmod p.\]
\end{description}
Sun got \cite{sun2} these by studying the sum (\ref{generalsum}) for $a=1$ and $m=8$.
\end{remark}

\begin{lemma}\label{uv1lucasmod}
   Let $p\nmid B$ be an odd prime and $A'$ be an integer such that $A'\equiv \frac{A}{B^2}\pmod p $. Let
  $u_n'=u_n(A',1),\;v_n'=v_n(A',1).$ Then we have
\[u_{\frac{p+1}{2}}\equiv\left(\frac Bp\right)u'_{\frac{p+1}{2}}\pmod p,\quad u_{\frac{p-1}{2}}\equiv\frac1B\left(\frac Bp\right)u_{\frac{p-1}{2}}'\pmod p,\]
\[v_{\frac{p+1}{2}}\equiv B\left(\frac Bp\right)v'_{\frac{p+1}{2}} \pmod p,\quad v_{\frac{p-1}{2}}\equiv \left(\frac Bp\right)v_{\frac{p-1}{2}}'\pmod p.\]
\end{lemma}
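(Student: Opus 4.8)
The plan is to reduce all four congruences to a single pair of scaling identities relating the two Lucas sequences modulo $p$. The governing observation is that the characteristic polynomials are related by a substitution: if $\alpha,\beta$ denote the roots of $x^2-Bx+A$, then $\alpha B^{-1},\beta B^{-1}$ are the roots of $x^2-x+AB^{-2}$, and since $A'\equiv AB^{-2}\pmod p$ with $p\nmid B$, the sequence $u_n(A',1)\bmod p$ is the Binet expression built from these scaled roots. Reading off $u_n=\frac{\alpha^n-\beta^n}{\alpha-\beta}$ and $v_n=\alpha^n+\beta^n$ then suggests the identities
\[
u_n(A,B)\equiv B^{\,n-1}u_n(A',1)\pmod p,\qquad v_n(A,B)\equiv B^{\,n}v_n(A',1)\pmod p .
\]

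I would not invoke the Binet formula directly, since it requires inverting $\alpha-\beta$ and hence degenerates when $p\mid D$; instead I would establish the two displayed identities by simultaneous induction on $n$, working entirely in $\mathbb{Z}/p\mathbb{Z}$. The base cases $n=0,1$ hold by direct substitution of the initial values (using $u_1(A',1)=1$ and $v_0(A',1)=2,\ v_1(A',1)=1$). For the inductive step I would expand $B^{n}u_{n+1}(A',1)=B^{n}\bigl(u_n(A',1)-A'u_{n-1}(A',1)\bigr)$, then rewrite $B^{n}u_n(A',1)=B\cdot B^{\,n-1}u_n(A',1)$ and $B^{n}A'\equiv AB^{\,n-2}\pmod p$ --- this last step is exactly where the hypothesis $A'\equiv AB^{-2}$ enters --- so that the inductive hypotheses turn the right-hand side into $Bu_n(A,B)-Au_{n-1}(A,B)=u_{n+1}(A,B)$; the computation for $v_n$ is word-for-word the same. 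This needs only $p\nmid B$, and nothing about $A$ or $D$.

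With the identities in hand, the four congruences follow by setting $n=\tfrac{p\pm1}{2}$ and rewriting the single surviving power of $B$ via Euler's criterion $B^{(p-1)/2}\equiv\left(\frac Bp\right)\pmod p$. Concretely $u_{(p+1)/2}(A,B)\equiv B^{(p-1)/2}u'_{(p+1)/2}\equiv\left(\frac Bp\right)u'_{(p+1)/2}$; $u_{(p-1)/2}(A,B)\equiv B^{(p-3)/2}u'_{(p-1)/2}=B^{-1}B^{(p-1)/2}u'_{(p-1)/2}\equiv\frac1B\left(\frac Bp\right)u'_{(p-1)/2}$; and likewise $v_{(p+1)/2}$ acquires $B^{(p+1)/2}=B\cdot B^{(p-1)/2}\equiv B\left(\frac Bp\right)$ while $v_{(p-1)/2}$ acquires exactly $B^{(p-1)/2}\equiv\left(\frac Bp\right)$. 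I expect the only delicate point to be this exponent bookkeeping: each exponent must be split as $\tfrac{p-1}{2}$ plus a small integer so that Euler's criterion applies and the leftover factor ($1$, $B$, or $B^{-1}$) matches the claimed constant. There is no deeper obstacle; the entire content sits in the two scaling identities, whose inductive proof is routine once $A'\equiv AB^{-2}$ is inserted at the right place.
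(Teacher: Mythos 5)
Your proposal is correct, and it lands on exactly the two identities the paper uses, namely $u_n(A,B)\equiv B^{n-1}u_n(A',1)$ and $v_n(A,B)\equiv B^{n}v_n(A',1)\pmod p$, followed by the same Euler's-criterion bookkeeping at $n=\frac{p\pm1}{2}$. The only difference is how those scaling identities are obtained: the paper pulls them out of the explicit binomial expansion of $u_n$ and $v_n$ in terms of $B$ and $D=B^2-4A$ (Lemma 2.1 of \cite{dy}), factoring $B^{n-1}$ resp.\ $B^{n}$ out of each term and replacing $D/B^2$ by $D'=1-4A'$ modulo $p$, whereas you prove them by simultaneous induction on the recurrence. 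Your induction is the more elementary and self-contained route --- it needs only $p\nmid B$ and the congruence $A'B^2\equiv A$, and it avoids any appeal to the closed-form expansion (and hence any worry about the Binet denominator when $p\mid D$, a point you correctly flag); the paper's route is shorter on the page only because it can cite the expansion as a known lemma. Either way the final step is identical, so the proof goes through as you describe.
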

\begin{proof}
  By Lemma 2.1 of \cite{dy} and $D'=1-4A'\equiv\frac{D}{B^2}\pmod p,$ we have
   \begin{align*}
   u_{n}
    &=2\sum_{\substack{k=0\\k\;odd}}^{n}\binom{n}{k}\left(\frac{B}2\right)^{n-k}\left(\frac{D}2\right)^{\frac{k-1}{2}}\\
    &=2B^{n-1}\sum_{\substack{k=0\\k\;odd}}^{n}\binom{n}{k}\left(\frac{1}2\right)^{\frac{p-1}{2}-k}\left(\frac{D}{2B^2}\right)^{\frac{k-1}{2}}\\
    &\equiv2B^{n-1}\sum_{\substack{k=0\\k\;odd}}^{n}\binom{n}{k}\left(\frac{1}2\right)^{n-k}\left(\frac{D'}{2}\right)^{\frac{k-1}{2}}\\
    &=B^{n-1}u'_n\pmod p,
  \end{align*}
and
   \begin{align*}
   v_{n}
    &=2\sum_{\substack{k=0\\k\;even}}^{n}\binom{n}{k}\left(\frac{B}2\right)^{n-k}\left(\frac{D}2\right)^{\frac{k}{2}}\\
    &=2B^{n}\sum_{\substack{k=0\\k\;even}}^{n}\binom{n}{k}\left(\frac{1}2\right)^{\frac{p-1}{2}-k}\left(\frac{D}{2B^2}\right)^{\frac{k}{2}}\\
    &\equiv2B^{n}\sum_{\substack{k=0\\k\;even}}^{n}\binom{n}{k}\left(\frac{1}2\right)^{n-k}\left(\frac{D'}{2}\right)^{\frac{k}{2}}\\
    &=B^{n}v'_n\pmod p,
  \end{align*}
Thus 
 \begin{align*}
 &u_{\frac{p+1}{2}}\equiv B^{\frac{p-1}{2}}u'_{\frac{p+1}{2}}\equiv\left(\frac Bp\right)u'_{\frac{p+1}{2}}\pmod p,\\
 & u_{\frac{p-1}{2}}\equiv B^{\frac{p-3}{2}}u'_{\frac{p-1}{2}}\equiv\frac1B\left(\frac Bp\right)u_{\frac{p-1}{2}}'\pmod p, \\
 &v_{\frac{p+1}{2}}\equiv B^{\frac{p+1}{2}}v'_{\frac{p+1}{2}}\equiv B\left(\frac Bp\right)v'_{\frac{p+1}{2}} \pmod p,\\
 & v_{\frac{p-1}{2}}\equiv B^{\frac{p-1}{2}}v'_{\frac{p-1}{2}}\equiv \left(\frac Bp\right)v_{\frac{p-1}{2}}'\pmod p.
 \end{align*}
\end{proof}

\begin{theorem} \label{52lucas}
   Let $p\neq5$ be an odd prime and $\{U_n\}_{n\geq0}$ and $\{V_n\}_{n\geq0}$ be the Lucas sequences defined  as
$$U_0=0,U_1=1,U_{n+1} =2U_{n}-5U_{n-1} \;\textup{for}\;n\geq1;$$
$$V_0=2,V_1=2,V_{n+1}=2V_{n}-5V_{n-1}\;\textup{for}\;n\geq1.$$
\begin{description}
 \item[(1)] If $p\equiv\pm1\pmod 5$, we have
\begin{align*}
&U_{\frac{p+\left(\frac{-1}{p}\right)}{2}}\equiv\left(\frac{-1}{p}\right)(-1)^{[\frac{p+5}{10}]}5^{[\frac{p}{4}]}\pmod p,\\ &U_{\frac{p-\left(\frac{-1}{p}\right)}{2}}\equiv0\pmod p,\\
&V_{\frac{p+\left(\frac{-1}{p}\right)}{2}}\equiv2(-1)^{[\frac{p+5}{10}]}5^{[\frac{p}{4}]} \pmod p,\\
&V_{\frac{p-\left(\frac{-1}{p}\right)}{2}}\equiv2(-1)^{[\frac{p+5}{10}]}5^{[\frac{p+1}{4}]}\pmod p.
\end{align*}
\item[(2)] If $p\equiv\pm2\pmod 5$, we have
\begin{align*}
&U_{\frac{p+\left(\frac{-1}{p}\right)}{2}}\equiv\frac12\left(\frac{-1}{p}\right)(-1)^{[\frac{p+5}{10}]}5^{[\frac{p}{4}]}\pmod p,\\ &U_{\frac{p-\left(\frac{-1}{p}\right)}{2}}\equiv\frac12\left(\frac{-1}{p}\right)(-1)^{[\frac{p+5}{10}]}5^{[\frac{p+1}{4}]}\pmod p,\\
&V_{\frac{p+\left(\frac{-1}{p}\right)}{2}}\equiv4(-1)^{[\frac{p-5}{10}]}5^{[\frac{p}{4}]} \pmod p,\\
&V_{\frac{p-\left(\frac{-1}{p}\right)}{2}}\equiv0\pmod p.
\end{align*}
\end{description}
\end{theorem}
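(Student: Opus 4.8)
The discriminant of $\{U_n\},\{V_n\}$ is $D=B^2-4A=2^2-4\cdot5=-16$, so $\left(\frac{D}{p}\right)=\left(\frac{-16}{p}\right)=\left(\frac{-1}{p}\right)$. Hence the index $\frac{p+\left(\frac{-1}{p}\right)}{2}$ equals $\frac{p+1}{2}$ or $\frac{p-1}{2}$ according as $p\equiv1$ or $3\pmod4$, and it suffices to evaluate $U_{\frac{p\pm1}{2}}$ and $V_{\frac{p\pm1}{2}}$ modulo $p$. The plan is to reduce these to the corresponding values of the Fibonacci and Lucas numbers by the change-of-parameter lemmas, and then to invoke the classical half-index congruences for $F_n$ and $L_n$.

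First I would apply Lemma \ref{uvlucasmod} with $A=5,B=2$: since $4A'\equiv B^2-4A=-16\pmod p$ we take $A'\equiv-4$, so the auxiliary sequences are $u''_n=u_n(-4,2)$, $v''_n=v_n(-4,2)$, and the lemma expresses each of $U_{\frac{p\pm1}{2}},V_{\frac{p\pm1}{2}}$ through the $u''$ and $v''$ at nearby half-indices. Next I would apply Lemma \ref{uv1lucasmod} to $u''_n,v''_n$ (here $A=-4,B=2$, so $A'\equiv A/B^2\equiv-1$), whose auxiliary sequences are $u_n(-1,1)=F_n$ and $v_n(-1,1)=L_n$, the Fibonacci and Lucas numbers. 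Composing the two lemmas, the factors $\left(\frac{2}{p}\right)$ introduced at each step cancel in pairs, leaving the clean congruences
\begin{align*}
U_{\frac{p-1}{2}}&\equiv-\tfrac12F_{\frac{p-1}{2}}, & U_{\frac{p+1}{2}}&\equiv\tfrac12L_{\frac{p-1}{2}},\\
V_{\frac{p-1}{2}}&\equiv2F_{\frac{p+1}{2}}, & V_{\frac{p+1}{2}}&\equiv2L_{\frac{p+1}{2}}\pmod p.
\end{align*}

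It then remains to compute $F_{\frac{p\pm1}{2}},L_{\frac{p\pm1}{2}}\pmod p$. Writing $\alpha=\frac{1+\sqrt5}{2},\beta=\frac{1-\sqrt5}{2}$ (so $\alpha\beta=-1$), I would split on $\left(\frac{5}{p}\right)=\left(\frac{p}{5}\right)$, which is $+1$ exactly when $p\equiv\pm1\pmod5$ and $-1$ when $p\equiv\pm2\pmod5$, precisely the case division of the theorem. When $\left(\frac{5}{p}\right)=1$ one has $\alpha^{p-1}\equiv1$, so $\alpha^{\frac{p-1}{2}}\equiv\pm1$; together with $\alpha\beta=-1$ this forces exactly one of $F_{\frac{p-1}{2}},L_{\frac{p-1}{2}}$ to vanish and the other to equal $\pm2$ or $\pm2/\sqrt5$, the alternative being governed by $\left(\frac{-1}{p}\right)$. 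When $\left(\frac{5}{p}\right)=-1$ one works in $\mathbb{F}_{p^2}$ with $\alpha^p\equiv\beta$, so $\alpha^{\frac{p+1}{2}}$ is a square root of $\alpha\beta=-1$, and a parallel analysis evaluates $F_{\frac{p+1}{2}},L_{\frac{p+1}{2}}$. Finally I would rewrite the surviving powers of $\sqrt5$ as powers of $5$ via identities such as $5^{\frac{p+1}{4}}\equiv\pm\sqrt5$ (valid when $p\equiv3\pmod4$ and $\left(\frac{5}{p}\right)=1$) and their analogues, producing the factors $5^{[p/4]}$ and $5^{[(p+1)/4]}$, and substitute into the displayed congruences to obtain the four stated formulas in each residue class.

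The main obstacle is the exact sign determination in the half-index congruences for $F$ and $L$: fixing whether $\alpha^{\frac{p-1}{2}}\equiv+1$ or $-1$ (equivalently, the quadratic character of $\alpha$) and which square root of $-1$ the quantity $\alpha^{\frac{p+1}{2}}$ equals. These are exactly the inputs that turn into the floor-exponent signs $(-1)^{[\frac{p+5}{10}]}$ and $(-1)^{[\frac{p-5}{10}]}$, encoding a dependence on $p$ modulo $20$; everything else is routine bookkeeping over the four cases $\left(\frac{5}{p}\right),\left(\frac{-1}{p}\right)\in\{\pm1\}$. Since these Fibonacci--Lucas half-index congruences are classical, one may alternatively cite them and skip the sign computation; or, avoiding Fibonacci altogether, carry out the same Frobenius analysis directly for $\alpha=1+2i\in\mathbb{Z}[i]$, where $\alpha\bar\alpha=5$ supplies the powers of $5$ intrinsically.
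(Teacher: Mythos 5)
Your proposal follows essentially the same route as the paper: the paper also composes Lemma \ref{uvlucasmod} (with $A'\equiv-4$) and Lemma \ref{uv1lucasmod} (reducing to $u_n(-1,1)=F_n$, $v_n(-1,1)=L_n$) to obtain exactly your four congruences $U_{\frac{p+1}{2}}\equiv\frac12L_{\frac{p-1}{2}}$, $U_{\frac{p-1}{2}}\equiv-\frac12F_{\frac{p-1}{2}}$, $V_{\frac{p+1}{2}}\equiv2L_{\frac{p+1}{2}}$, $V_{\frac{p-1}{2}}\equiv2F_{\frac{p+1}{2}}\pmod p$, and then finishes by citing the classical half-index evaluations of $F_{\frac{p\pm1}{2}},L_{\frac{p\pm1}{2}}$ (Corollaries 1 and 2 of Sun--Sun), which is precisely the citation option you note; your sketch of deriving those values via the Frobenius action on $\alpha=\frac{1+\sqrt5}{2}$ is a workable substitute for that citation.
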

\begin{proof}
  Let $F_n=u_n(-1,1)$ and $L_n=v_n(-1,1)$ be Fibonacci sequence and its companion. Then by Lemmas \ref{uvlucasmod} and \ref{uv1lucasmod}, we have
\[U_{\frac{p+1}{2}}\equiv\frac12L_{\frac{p-1}{2}}\pmod p, \quad
U_{\frac{p-1}{2}}\equiv-\frac12F_{\frac{p-1}{2}}\pmod p,\]
\[V_{\frac{p+1}{2}}\equiv2L_{\frac{p+1}{2}}\pmod p,\quad
V_{\frac{p-1}{2}}\equiv2F_{\frac{p+1}{2}}\pmod p.\]
Thus by   Corollaries 1 and  2 of \cite{ss}, we can derive the results.
\end{proof}
\begin{remark}
 In \cite{dy}, we gave some congruences for the Lucas quotient $ U_{p-\left(\frac{-1}p\right)}/p$ by studying the sum (\ref{generalsum}) for $a=-2$ and $m=4$.
\end{remark}
\begin{corollary}
Let $p\neq5$ be an odd prime, $\{U_n\}_{n\geq0}$ and   $\{V_n\}_{n\geq0}$  be Lucas sequences defined as above.
 \begin{description}
 \item[(1)] If $p\equiv1\pmod4$, then $p\mid U_{\frac{p-1}{4}}$ if and only if $p\equiv1\pmod {20}$ and $p\mid V_{\frac{p-1}{4}}$ if and only if $p\equiv9\pmod {20}$.
  \item[(2)] If $p\equiv3\pmod4$, then $p\mid U_{\frac{p+1}{4}}$ if and only if $p\equiv19\pmod {20}$ and $p\mid V_{\frac{p+1}{4}}$ if and only if $p\equiv11\pmod {20}$.
 \end{description}
\end{corollary}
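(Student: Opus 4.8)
The plan is to specialize Lemma~\ref{ulucasmod} and Theorem~\ref{52lucas} to $A=5$, $B=2$, so that $D=B^2-4A=-16$ and $\{U_n\},\{V_n\}$ are precisely $\{u_n(5,2)\},\{v_n(5,2)\}$. Since $p$ is odd and $p\neq5$ we have $p\nmid DA=-80$, so Lemma~\ref{ulucasmod} applies. The key observation is that each of the four divisibility assertions reduces, via Lemma~\ref{ulucasmod}, to a single congruence of the shape $V_{(p-\left(\frac{-1}{p}\right))/2}\equiv\pm2\cdot5^{(p\mp1)/4}\pmod p$, and Theorem~\ref{52lucas} evaluates the left-hand side explicitly; comparing the two sides decides the congruence.

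Concretely, for case~(1) we have $p\equiv1\pmod4$, hence $\left(\frac{-1}{p}\right)=1$ and $(p-\left(\frac{-1}{p}\right))/2=(p-1)/2$. By Lemma~\ref{ulucasmod}(1), $p\mid U_{(p-1)/4}$ iff $V_{(p-1)/2}\equiv2\cdot5^{(p-1)/4}\pmod p$, and $p\mid V_{(p-1)/4}$ iff $V_{(p-1)/2}\equiv-2\cdot5^{(p-1)/4}\pmod p$. I then substitute the value of $V_{(p-1)/2}=V_{(p-\left(\frac{-1}{p}\right))/2}$ from Theorem~\ref{52lucas}. If $p\equiv\pm2\pmod5$ this value is $\equiv0$, which cannot equal the unit $\pm2\cdot5^{(p-1)/4}$, so neither divisibility holds; these are exactly the classes $p\equiv13,17\pmod{20}$, rightly absent from the statement. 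If $p\equiv\pm1\pmod5$, Theorem~\ref{52lucas} gives $V_{(p-1)/2}\equiv2(-1)^{[(p+5)/10]}5^{[(p+1)/4]}$, and because $p\equiv1\pmod4$ forces $[(p+1)/4]=(p-1)/4$, the powers of $5$ cancel and the whole question collapses to the sign $(-1)^{[(p+5)/10]}$ being $+1$ (giving $p\mid U_{(p-1)/4}$) or $-1$ (giving $p\mid V_{(p-1)/4}$). Case~(2) runs in parallel: here $p\equiv3\pmod4$, $\left(\frac{-1}{p}\right)=-1$, $(p-\left(\frac{-1}{p}\right))/2=(p+1)/2$, I invoke Lemma~\ref{ulucasmod}(2), and now $[(p+1)/4]=(p+1)/4$ matches the exponent $5^{(p+1)/4}$ appearing there, again reducing everything to the sign $(-1)^{[(p+5)/10]}$.

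What remains is an elementary determination of the parity of $[(p+5)/10]$ across the admissible residue classes. Writing $p=20m+c$, a direct check shows $[(p+5)/10]$ is even for $c\in\{1,19\}$ and odd for $c\in\{9,11\}$; intersecting with $p\equiv\pm1\pmod5$ (the only classes where $V$ is nonzero) then yields, for $p\equiv1\pmod4$, $p\mid U_{(p-1)/4}\iff p\equiv1\pmod{20}$ and $p\mid V_{(p-1)/4}\iff p\equiv9\pmod{20}$, and for $p\equiv3\pmod4$, $p\mid U_{(p+1)/4}\iff p\equiv19\pmod{20}$ and $p\mid V_{(p+1)/4}\iff p\equiv11\pmod{20}$. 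The only real obstacle is bookkeeping: one must track carefully which of $(p-1)/2$, $(p+1)/2$ coincides with $(p-\left(\frac{-1}{p}\right))/2$ in each parity case and confirm that the floor exponent delivered by Theorem~\ref{52lucas} aligns with the exponent $5^{(p\mp1)/4}$ demanded by Lemma~\ref{ulucasmod}; once this alignment is verified the conclusion is immediate.
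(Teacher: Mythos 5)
Your proposal is correct and follows exactly the paper's (very terse) proof, which simply cites Lemma~\ref{ulucasmod} and Theorem~\ref{52lucas}; you have filled in the same specialization $A=5$, $B=2$, $D=-16$, the identification of $V_{(p\mp1)/2}$ with $V_{(p-\left(\frac{-1}{p}\right))/2}$, and the parity check of $[(p+5)/10]$ that the paper leaves implicit. All the bookkeeping (floor exponents, the vanishing of $V$ in the classes $p\equiv\pm2\pmod 5$, and the residue classes modulo $20$) checks out.
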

\begin{proof}
 (1) and (2) follow from Lemma \ref{ulucasmod}   and   Theorem \ref{52lucas}.
\end{proof}

\vspace{0.5cm}

\noindent \textbf{Acknowledgments}\quad The work of this paper
was supported by the NNSF of China (Grant No. 11471314), and the National Center for Mathematics and Interdisciplinary Sciences, CAS.

\end{document}